\numberwithin{equation}{section}
\theoremstyle{plain}
\newtheorem{theorem}{Theorem}[section]
\newtheorem{corollary}{Corollary}[section]
\newtheorem{proposition}{Proposition}[section]
\newtheorem{lemma}{Lemma}[section]
\DeclareMathOperator{\sgn}{sgn}
\def \L{L_\lambda}
\def \en{\varepsilon_n}
\begin{document}

\begin{frontmatter}
\title{Cube root weak convergence \\ of empirical estimators \\of a density level set}
\runtitle{Weak convergence for estimated level sets}

\begin{aug}
\author{\fnms{Philippe} \snm{Berthet}\ead[label=e1]{philippe.berthet@math.univ-toulouse.fr} \and
\fnms{John H.J.} \snm{Einmahl}\ead[label=e2]{j.h.j.einmahl@uvt.nl} 
\!\thanksref{t1}}
\runauthor{P. Berthet and  J.H.J. Einmahl}

\thankstext{t1}{John Einmahl holds the Arie Kapteyn Chair 2019--2022 and gratefully acknowledges the corresponding research support.}
\address{Institut de Math\'{e}matiques de Toulouse; UMR5219\\
Universit\'{e} de Toulouse; CNRS\\
UPS IMT, F-31062 Toulouse Cedex 9\\ France\\
\printead{e1}
}
\address{
Dept.\ of Econometrics and OR and CentER\\
Tilburg University\\
PO Box 90153, 5000 LE Tilburg\\
The Netherlands\\
\printead{e2}}

\end{aug}

\begin{abstract}

Given $n$ independent random vectors with common density $f$ on $\mathbb{R}^d$, we study the weak convergence of three empirical-measure based estimators of the convex $\lambda$-level set $L_\lambda$ of $f$, namely  the excess mass set, the minimum volume set and the  maximum probability set, all selected from a class of convex sets $\mathcal{A}$ that contains $L_\lambda$. Since these set-valued estimators approach $L_\lambda$, even the formulation of their weak convergence is non-standard. We identify the joint limiting distribution of the symmetric difference of $L_\lambda$ and each of the three estimators, at rate $n^{-1/3}$. It turns out that the minimum volume set and the maximum probability set estimators are asymptotically indistinguishable, whereas the excess mass set estimator exhibits ``richer" limit behavior. Arguments rely on the boundary local empirical process, its cylinder representation,  dimension-free concentration around the boundary of $L_\lambda$, and the set-valued argmax of a drifted Wiener process.

\end{abstract}

\begin{keyword}[class=MSC]
\kwd[Primary ]{62G05, 62G20}
\kwd[; secondary ]{60F05, 60F17}
\end{keyword}

\begin{keyword}
\kwd{Argmax drifted Wiener process, cube root asymptotics, density level set, excess mass, local empirical process, minimum volume set, set-valued estimator}
\end{keyword}

\tableofcontents
\end{frontmatter}

\section{Introduction}

\subsection{Three level set estimators}
Let $X_1, \dots, X_n  $, $n\in \mathbb{N}$, be  independent
and identically distributed random variables 
taking values in $\mathbb{\mathbb{R}}^{d}$, $d \in \mathbb{N}$, endowed with  Lebesgue measure $\mu$ and Borel sets $\mathcal{B}(\mathbb{R}^{d})$. Assume that the law $P$ of $X_1$ is absolutely continuous with respect to $\mu$ with continuous density $f$. We intend to establish  novel, non-standard weak limit theorems for three set-valued estimators of a convex level set of $f$, treated as random sets rather than estimated finite-dimensional  parameters.\smallskip

\noindent\textbf{Motivation.} Several classical problems in multivariate statistics involve set-valued estimators  based on $X_{1},\ldots ,X_{n}$.  For instance, in order to detect areas having high probability $P$, to localize modes or clusters, to test for multimodality, to find outliers, or to test for goodness-of-fit to a family of distributions. In particular, many approaches and procedures rely on $\lambda$-level sets $L_{\lambda}$ of the density $f$ ($\lambda>0$). The plug-in method consists of using the corresponding level set of some density estimator. Alternatively, estimators of $L_{\lambda}$ can be obtained by selecting a set in a  class $\mathcal{A}\subset\mathcal{B}(\mathbb{R}^{d})$ according to some optimization criterion applied  directly to the empirical measure of $X_{1},\ldots,X_{n}$. Here we avoid density estimation and follow the latter approach.  Note that maybe the most natural class of sets $\mathcal{A}$ is the class of all closed ellipsoids. We will consider the classical nonparametric M-estimators of $L_{\lambda}$ based on the  following three criteria:
\begin{itemize}
    \item excess mass,
    \item minimum volume, and
    \item maximum probability.
\end{itemize} 
In particular, the first two criteria have been studied in the literature extensively. The third one is also very natural, since it is a kind of inverse of the minimum volume approach. 

Seminal papers on the excess mass approach are \cite{MS91}, \cite{N91}, \cite{M92}, and \cite{P95}, and pioneering work on   the minimum volume approach can be found in \cite{ST80}, \cite{R85}, \cite{D92}, and  \cite{P97}. For the maximum probability approach we refer to \cite{P98}. For different, early approaches to the estimation of density level sets see
\cite{H87} and \cite{T97}, and for more recent work, see, e.g.,  \cite{C06}, \cite{CEH11}, and \cite{CGW17}. Statistical/machine learning approaches to  the aforementioned criteria, include \cite{CGS15} and \cite{SN06}.
As far as asymptotic theory is concerned, the  results in the literature regarding empirical estimators of the level sets study  rates of convergence towards the true level set for appropriately defined distances. Other types of results consider weak convergence  for estimators of the parameters of a parametrically defined level set. 

The main goal of this paper is to deal with the weak convergence of the three classical, competing \textit{set-valued estimators} of the level set $L_{\lambda}$ \textit{themselves} and look for  their  differences or similarities, 
jointly. Since these estimators approach $L_\lambda$, even the formulation of weak convergence is non-standard. Our main results are novel central limit theorems for the aforementioned three  empirical-measure based estimators of $L_{\lambda}$, which reveal their interesting  asymptotic behavior as random sets  and provide the distribution of their limiting sets, obtained  after cube-root-$n$  magnification. The proofs raised various challenges as indicated in Subsection \ref{chally} below.\smallskip

\noindent\textbf{Target level set.} Fix $\lambda>0$ throughout and assume that the level set
$$
L_{\lambda}=\left\{  x\in\mathbb{R}^{d}:f(x)\geq \lambda\right\}
$$ 
is a convex body, that is, it is convex, compact, and has non-empty interior, and that  $S_{\lambda}=\left\{  x\in \mathbb{R}^{d}:f(x)=\lambda\right\} $ coincides with its boundary: $S_{\lambda}=\partial L_{\lambda}$.
Note that $f>\lambda$ on $L_{\lambda} \setminus S_{\lambda}$ and $f<\lambda$ on $\mathbb{R}^{d}\setminus L_{\lambda}$. Hence,
$$
e_{\lambda}=p_{\lambda}-\lambda v_{\lambda}>0, \mbox{ with }
p_{\lambda}=P(L_{\lambda})\in (0,1)\mbox{ and } v_{\lambda}=\mu(L_{\lambda})\in (0,1/\lambda).
$$
We denote  the Hausdorff surface measure of $S_\lambda$ by $s_\lambda$ and have $s_\lambda\geq c_dv_\lambda^{1-1/d}>0$ by the isoperimetric inequality, with $c_d>0$. Let $\mathcal{A}%
\subset\mathcal{B}(\mathbb{R}^{d})$ be a class of closed, convex sets
with $L_{\lambda}\in \mathcal{A}$. Then we have
\begin{align*}
L_{\lambda}  & =\ \underset{A\in\mathcal{A}}{\arg\max}\left\{  P(A)-\lambda
\mu(A)\right\} \\
& =\ \underset{A\in\mathcal{A}}{\arg\min}\left\{  \mu(A):P(A)\geq 
p_{\lambda}\right\} \\
& =\ \underset{A\in\mathcal{A}}{\arg\max}\left\{  P(A):\mu(A)\leq 
v_{\lambda}\right\}  ,
\end{align*}
and the maximizing/minimizing set is unique. In other words, if $\lambda$ is known then $L_{\lambda}$ maximizes on
$\mathcal{A}$ the excess mass function $A\mapsto e_{\lambda}(A)=P(A)-\lambda
\mu(A)$, if $p_{\lambda}$ is known then $L_{\lambda}$ minimizes on
$\{A\in \mathcal{A}: P(A)\geq 
p_{\lambda}\}$ the volume function $A\mapsto\mu(A)$ and if $v_{\lambda}$ is
known then $L_{\lambda}$ maximizes on $\{A\in \mathcal{A}: 
\mu(A)\leq 
v_{\lambda}
\}$ the probability mass function
$A\mapsto P(A)$.\smallskip

\noindent\textbf{Empirical level sets.} Let $\delta_{x}$ denote the Dirac measure
at $x$. From the nonparametric viewpoint it is natural to estimate $P$ with the
empirical measure $P_{n}=n^{-1}%
{\textstyle\sum\nolimits_{i=1}^{n}}
\delta_{X_{i}} $ in the above argmax and argmin. 
To motivate a joint study, imagine that three
statisticians want to estimate the level set $L_{\lambda}$ by using the same sample $X_{1},...,X_{n}$. Assume that they all know $\mathcal{A}$ and that $L_{\lambda}%
\in\mathcal{A}$, but that they have their own private, auxiliary information. The first statistician
knows the level $\lambda$ and therefore makes use of the set-valued excess mass estimator
\begin{equation}\label{en}
L_{1,n}\in\underset{A\in\mathcal{A}}{\arg\max}\left\{  P_{n}(A)-\lambda
\mu(A)\right\}
.
\end{equation}
The second one knows $p_{\lambda}$ and then makes use of the minimum volume estimator
\begin{equation}\label{vn}
L_{2,n}\in\underset{A\in\mathcal{A}}{\arg\min}\left\{  \mu(A):P_{n}%
(A)\geq  p_{\lambda}\right\}. 
\end{equation}
The third statistician knows $v_{\lambda}$ and thus makes use of the maximum probability estimator
\begin{equation}\label{pn}
L_{3,n}\in\underset{A\in\mathcal{A}}{\arg\max}\left\{  P_{n}(A):\mu
(A)\leq  v_{\lambda}\right\}.
\end{equation}
We  assume that $P$ and $\mathcal{A}$ are such that almost surely an 
 $L_{1,n}$ and an  $L_{2,n}$ exist and that $P_n(L_{2,n})=\lceil np_\lambda \rceil/n$. Since $P_n$ takes at most $n+1$ values, an  $L_{3,n}$  always exists.  
If $L_{j,n}$, $j=1,2,3,$ are not unique, just choose any maximizer/minimizer. It will be shown that the choice does not matter since they are indistinguishable asymptotically. 

\subsection{Overview of the results}\label{chally} What can be put forward before introducing more precisely our geometrical and probabilistic framework is as follows.\smallskip

\noindent\textbf{Convergence of random sets.} In order to compare the performance of the empirical sets  $L_{j,n}$ we study the joint limiting behavior of $L_{j,n}\bigtriangleup L_{\lambda}$, $j=1,2,3$, where $L\bigtriangleup L'=(L\cup L')\setminus(L\cap L')$ denotes the symmetric difference. The ensuing non-classical asymptotics for these set-valued estimators goes beyond the usual statistical risk approach which only provides rates for the random variables $P(L_{j,n}\bigtriangleup L_{\lambda})$ or $\mu(L_{j,n}\bigtriangleup L_{\lambda})$, for $j=1,2,3$. Instead we address the question of the weak convergence of the random sets $L_{j,n}\bigtriangleup L_{\lambda}$ themselves. We then have to design an appropriate setting allowing to state  central limit theorems for 
random sets, that is,  for sets properly centered and then magnified at a diverging scale. Our joint limit results  reveal, when magnifying with $n^{1/3}$, how the three empirical sets $L_{j,n}$ asymptotically differ or coincide. 
In particular we find that $L_{2,n}$ and $L_{3,n}$ are asymptotically indistinguishable.
Note that in the literature these limit theorems  have been considered for dimension one only, where the sets are intervals  which can be represented by two numbers,  like in the estimation of the shorth.  Hence those central limit theorems can be stated in the usual way, 
see, e.g., \cite{KP90}.\smallskip

\noindent\textbf{A local empirical process approach.} In order to analyze how the estimators $L_{j,n}$  oscillate around $L_{\lambda}$ we first show that they concentrate at rate $n^{-1/3}$  under regularity conditions that are satisfied in most of the natural settings.  Then we use an appropriate boundary empirical process, see \cite{K07}, \cite{KW08}, and \cite{EK11} and study its weak convergence on a  ``cylinder space" associated with the boundary $S_{\lambda}$ of $L_{\lambda}$. The relevant sets of $\mathcal{A}$ have to be close to $L_{\lambda}$ in Hausdorff distance at scale $n^{-1/3}$.
Interestingly, the local nature of the convergence makes both the rate dimension-free and the  Wiener process, appearing in the limit, 
distribution-free. 

\smallskip

\noindent\textbf{Organization.} Section 2 is devoted to the setup of the paper, including the relevant definitions, notation, and assumptions.   In Section 3 we present and discuss the main results and provide  a few explicit, illuminating examples. The proofs are deferred to Section 4.

\section{Setup, notation and assumptions}

\subsection{The geometrical framework and condition $H_1$}\label{21}
In order to define the appropriate limit setting the following notation and definitions are needed.

\smallskip

\noindent\textbf{The magnification map $\tau_{\varepsilon}$.} Let
$\left\Vert x\right\Vert $ denote the Euclidean norm of $x\in\mathbb{R}^{d}$ and $U=\{u: \left\Vert u\right\Vert =1\}$ the unit sphere.
Since $L_{\lambda}$ is a convex body, the metric projection $\Pi(x)\in S_{\lambda}$ of $x\in \mathbb{R}^{d}$ on
$S_{\lambda}=\partial L_{\lambda}$ is unique except for so-called skeleton points $x\in L_{\lambda}^{\ast}\subset L_{\lambda}$ with $\mu(L_{\lambda}^{\ast})=0$.
A unit vector $u \in U$ is called an outer normal of $L_\lambda$ at $\pi \in S_\lambda$ if there is some $x\in \mathbb{R}^d\setminus L_\lambda$ such that $\pi=\Pi(x)$ and $u=(x-\Pi(x))/||x-\Pi(x)||$. At each $\pi \in S_\lambda$, we denote the non-empty set of outer normals by $N(\pi)$ and write $S_{\lambda}^*=\{\pi\in S_{\lambda} :card(N(\pi))>1\}$. Note that $\mu(S_\lambda)=0$ and hence $\mu(S_\lambda^*)=0$. The normal bundle of $L_\lambda$ is
$$Nor(L_\lambda)=\{(\pi, u): \pi \in S_\lambda, u\in  N(\pi)\}.$$
As in \cite{K07} and \cite{EK11} define the magnification map
$\tau_{\varepsilon}$ at magnitude $\varepsilon>0$ to be%
\begin{equation}
\tau_{\varepsilon}(x)=\left(  \Pi(x),u(x),\frac{s(x)}{\varepsilon}\right)
\in Nor(L_\lambda) \times \mathbb{R},
\quad\text{for } x\in\mathbb{R}^{d}\setminus (L_{\lambda}^*\cup S_\lambda^*),\label{teps}%
\end{equation}
where $x=\Pi(x)+s(x)u(x)$, with $s(x)=\sgn(x-\Pi(x))||x-\Pi(x)||$ the signed distance between $x$ and $\Pi(x)$. 
\smallskip

\noindent\textbf{The cylinder space.} 
Define 
$\Sigma=Nor(L_\lambda)\times\mathbb{R}$.
Let $\nu_{d-1}$ denote both the Hausdorff surface measure on $S_{\lambda}$
(putting no mass at $S_{\lambda}^{\ast}$) and  its canonical extension to $Nor(L_\lambda)$ supported by the product Borel $\sigma$-algebra $\mathcal{G}_{d-1}$ on $S_{\lambda}\times U$. Thus, $\nu_{d-1}$ on $Nor(L_\lambda)$ is the so-called first
support measure, and we have $0<s_\lambda=\nu_{d-1}(S_{\lambda})=\nu_{d-1}(Nor(L_\lambda))<\infty$. Let $\mu_{1}$ be  Lebesgue measure on $\mathbb{R}$. The cylinder space $(\Sigma,\mathcal{F},M,d)$ is defined to be $\Sigma$ endowed with the product Borel $\sigma$-algebra $\mathcal{F}=\mathcal{G}_{d-1}\times \mathcal{B}(\mathbb{R})$, the  $\sigma$-finite product measure $M$ and the semi-metric $d$ given by
\begin{equation}
M=\nu_{d-1}\times\mu_{1}\label{m},\quad d(B,B^{\prime})=(M(B\bigtriangleup B^{\prime}))^{1/2}, \quad\text{for }B,B^{\prime}\in \mathcal{F}.
\end{equation}
For $c>0$   denote $\Sigma_{c}= Nor(L_\lambda)\times\left[  -c,c\right]$ and $\mathcal{F}_c=\left\{B\in\mathcal{F}:B\subset\Sigma_{c} \right\}$.

\smallskip

\noindent\textbf{The sufficiently parallel sets $\mathcal{A}^\varepsilon$.} 
Given $\varepsilon >0$ the $\varepsilon$-parallel set
of $S_{\lambda}$ is defined by  $S_{\lambda}^{\varepsilon}=\left\{  x:\left\Vert
x-\Pi(x)\right\Vert \leq \varepsilon\right\}  $ and we consider the sets in $\mathcal{A}$ that are ``sufficiently parallel" to $L_\lambda$,
\begin{equation}
\mathcal{A}^{\varepsilon}=\left\{ A\in\mathcal{A}:A\bigtriangleup L_{\lambda}\subset S_{\lambda}^{\varepsilon}\right\}, \quad \mathcal{C}^{\varepsilon}=\left\{ A \bigtriangleup L_{\lambda}: A\in\mathcal{A}^{\varepsilon}\right\}
.\label{Aeps}%
\end{equation}
Define the  set-to-set mapping  $$\tau_\varepsilon(C)=\{\tau_\varepsilon(x): x\in C\setminus(L_{\lambda}^*\cup S_\lambda^*)\}, \quad C\in\mathcal{B}(\mathbb{R}^d), $$
and the inverse $\tau^{-1}_\varepsilon(B)=\{x\in \mathbb{R}^d:\tau_\varepsilon(x) \in B\}$, for $B\in\mathcal{F}$. Note that 
$\tau^{-1}_\varepsilon(\tau_\varepsilon(C))=C\setminus (L_{\lambda}^*\cup S_\lambda^*).$
For $B\in \mathcal{F}$, define $\varphi_\varepsilon(B)$ to be the closure of $\tau^{-1}_\varepsilon(B)\bigtriangleup (L_\lambda \setminus (L_{\lambda}^*\cup S_\lambda^*)).$ For $A\in \mathcal{A}$, we then have $\varphi_\varepsilon(\tau_\varepsilon(A\bigtriangleup L_\lambda))=A.$

\smallskip

\noindent\textbf{The limiting class $\mathcal{B}$.} We need to magnify with $\varepsilon=n^{-1/3}$. Define for $c>0$ \begin{equation}\mathcal{B}_{c,n}=\tau_{n^{-1/3}}(\mathcal{C}^{cn^{-1/3}})=\{\tau_{n^{-1/3}}(A \bigtriangleup L_{\lambda}): A \in \mathcal{A}^{cn^{-1/3}}\}\label{bcn}
\end{equation}
and  $\mathcal{B}=\bigcup\nolimits_{c>0}\mathcal{B}_{c}$ where
\begin{equation}
\mathcal{B}_{c}=\left\{  B\in\mathcal{F}_{c}:\text{for some }B_{n}
\in\mathcal{B}_{c,n},\ \lim_{n\rightarrow \infty}\ d(B,
B_{n})=0\right\}. \label{Bc}
\end{equation}
Since $L_{\lambda}\in\mathcal{A}^{c n^{-1/3}}$ 
we have $\mathcal{B}\neq\emptyset$.
In the language of \cite{K07} each $B\in\mathcal{B}_{c}$ is a
derivative at $0$ of the set-valued function $\varepsilon\mapsto \tau
_{\varepsilon}(\mathcal{A}^{c\varepsilon})$ along the sequence $\varepsilon=n^{-1/3}$. Such limits are not uniquely determined. Actually the limit ``set" $B$ is an equivalence class of sets having $d$-distance equal to 0. Out of every equivalence class, we choose (only) one limit set $B\in\mathcal{F}_c$. This makes $d$ a metric on $\mathcal{B}_c$ and on $\mathcal{B}$. (The choices of the limit set matter. In applications we choose $B$'s such that the assumptions of our theorems are satisfied.)
Let us further assume that, for any $c>0$, $(\mathcal{B}_{c},d)$ is compact and
\begin{equation}
\label{unif}
\lim_{n\rightarrow \infty}\sup_{B_{n}\in\mathcal{B}_{c,n}}\inf_{B\in\mathcal{B}_{c}}d(B_n,B)=0.
\end{equation}

\noindent\textbf{Donsker classes.} Define 
$d_n(A,A')=
(n^{1/3}P(A\triangle A'))^{1/2}$. For $c>0$, let $\left[  \mathcal{A}\right]  _{c,n}$ and  $\left[  \mathcal{B}\right]  _{c,n}$ 
be the usual bracketing numbers w.r.t.\ $d_n$  of  $\mathcal{C}^{cn^{-1/3}} $ and $\{\tau^{-1}_{n^{-1/3}}(B):B \in \mathcal{B}_c\}$, respectively; see \cite{EK11}.
We assume either that for any $c>0$ we have
\begin{eqnarray}
\label{arn}
&&\lim_{\delta\downarrow 0}\underset{n\to \infty}{\lim\sup}
\int_{0}^{\delta} 
\sqrt{
\log\left[ \mathcal{A}\right]_{c,n}(\varepsilon)}
d\varepsilon=0, \\
&&\label{gc}
n^{1/2} \sup_{A \in\mathcal{A}}  
|P_n(A)-P(A)| 
=O_\mathbb{P}(1), \quad  n \to \infty, \\
&&
\label{brn} \lim_{\delta\downarrow0}\underset{n\rightarrow \infty}{\lim\sup}
\int\nolimits_{0}^{\delta}
\sqrt{\log\left[  \mathcal{B}\right]  _{c,n}(\varepsilon)}d\varepsilon=0,
\end{eqnarray}
or that 
\begin{equation}
\label{VC}
    \mathcal{A} \mbox{ and } \mathcal{B}\text{ are Vapnik-Chervonenkis 
    (VC) classes}.
\end{equation}
We also assume that $\mathcal{A}$ and $\mathcal{B}$ are pointwise measurable.

\smallskip 

\noindent\textbf{Nested class.} Assume that for all $r>0$, all $A \in \mathcal{A}$  there exists $A_r \in \mathcal{A}$ such that 
\begin{equation}
\label{nup}
A \subset A_r, \quad \mu(A_r)=\mu(A)+r.
\end{equation}

\smallskip
Let us denote by $H_{1}$ the conditions in Subsection \ref{21},   in particular compactness of $(\mathcal{B}_c,d)$,  (\ref{unif}), ``Donsker classes", and (\ref{nup}).

\subsection{Condition $H_2$}
We will  need conditions on the behavior of  $f$ near  $S_{\lambda}$. Let $H_2$ denote the conditions $(\ref{mass})$--$(\ref{margin2})$ below. 

Define the Hausdorff distance for the Euclidean norm on $\mathbb{R}^{d}$ as
\[
d_{H}(A,A^{\prime})=\max\left(  \sup_{x\in A}\inf_{x^{\prime}\in A^{\prime}%
}\left\Vert x-x^{\prime}\right\Vert,\sup_{x^{\prime}\in A^{\prime}}%
\inf_{x\in A}\left\Vert x-x^{\prime}\right\Vert\right)  ,\quad\text{for } A,A^{\prime}\subset\mathbb{\mathbb{R}}^{d}.
\]

\smallskip

\noindent\textbf{Excess risk of excess mass.} Consider the excess risk of excess mass
$$e_{\lambda}-e_{\lambda}(A)=P(L_\lambda)-P(A)-\lambda(\mu(L_\lambda)-\mu(A))=\int_{L_\lambda\bigtriangleup A}|f(x)-\lambda|d\mu(x). $$ We require that
for all $\delta>0$,
\begin{equation}
  \label{mass}
  \inf_{A\in \mathcal{A}  :d_H(L_\lambda, A)\geq \delta}\int_{L_\lambda\bigtriangleup A}|f(x)-\lambda|d\mu(x) >0.
\end{equation}
\smallskip

\noindent\textbf{The quadratic drift measure $D$.} We assume
that for some second-order derivatives $f_{+}^{\prime}\geq  0$ and
$f_{-}^{\prime}\geq  0$ defined on $S_{\lambda}$ we have%
\begin{align}
\lim_{\varepsilon\downarrow0}\frac{1}{\varepsilon^{2}}\int_{S_{\lambda
}^{\varepsilon}\setminus L_{\lambda}}\left\vert f(x)-\lambda+s(x)f_{+}%
^{\prime}(\Pi(x))\right\vert d\mu(x)  & =0,\label{f+}\\
\lim_{\varepsilon\downarrow0}\frac{1}{\varepsilon^{2}}\int_{S_{\lambda
}^{\varepsilon}\cap L_{\lambda}}\left\vert f(x)-\lambda+s(x)f_{-}^{\prime}%
(\Pi(x))\right\vert d\mu(x)  & =0.\label{f-}%
\end{align}
If $f$ is differentiable at $\pi\in S_{\lambda}\setminus S_{\lambda}^*$ then $f_{+}^{\prime}(\pi)=f_{-}^{\prime}(\pi)$.
Let us define on $(\Sigma,M)$ the  quadratic drift measure $D$ having density with respect to $M$ given by
\begin{equation*}
\frac{dD}{dM}(\pi,u,s)=sf_{+}^{\prime}(\pi)1_{s>0}-sf_{-}^{\prime}%
(\pi)1_{s\leq  0}\label{Df.}.
\end{equation*}

\noindent\textbf{Local excess risk of excess mass.}
Write $g(\pi)=\min(f'_+(\pi), f'_-(\pi))$ for $\pi\in S_\lambda$. Let assume that for some $\varepsilon_0>0$, $\eta_0>0$ and all $A\in\mathcal{A}$ such that $d_H(L_\lambda,A)\leq  \varepsilon_0$, we have
\begin{equation}
    \label{margin}
    \int_{L_\lambda\bigtriangleup A}|s(x)|g(\Pi(x))d\mu(x)\geq  \eta_0d_H^2(L_\lambda,A).
\end{equation}
Similarly we require that for all $B\in \mathcal{B}$  
\begin{equation}
    \label{margin2}
 D(B)\geq \eta_0  c^2(B),   
\end{equation}
where $c(B)=\inf\{c>0: B\in\mathcal{B}_c\}$. 
\smallskip

\section{Main results}

\subsection{Convergence of the excess mass set estimator}

Since $(\Sigma,M)$ is $\sigma$-finite and $(\mathcal{B},d)$ is $\sigma$-compact we can define a Wiener process $W$ indexed by $\mathcal{B}$, that is a centered Gaussian process with covariance
\[
Cov(W(B),W(B^{\prime}))=M(B\cap B^{\prime}),\quad \text{for } B,B^{\prime}%
\in\mathcal{B}.
\]
The intrinsic, standard deviation metric of $W$ on $\mathcal{B}$ is defined to be $(Var(W(B)-W(B'))^{1/2}
=d(B,B^{\prime})$. The relevant limiting random set is
\begin{equation}
Z(\mathcal{B})=\ \underset{B\in\mathcal{B}}{\arg\max
}\left\{ \sqrt{\lambda} W(B)-D(B)\right\}\label{Z}.
\end{equation}
This quantity has been studied in the univariate case where the sets reduce to numbers, see \cite{G85}, \cite{DC99}, and  \cite{BE06}.  Observe that $\mathbb{E} W^2(B) \leq 2s_\lambda c$ for $B \in \mathcal{B}_c$. 
We assume that for some $\eta _1>0$,
\begin{equation}
    \label{maxW}
    \mathbb{E}\left(\max _{B \in \mathcal{B}_c}W^2(B) \right) < \eta _1 c, \quad \text{for all } c >0.
\end{equation}

\begin{proposition}\label{Zwelldefined} 
Assume that $H_1$, $H_2$, and (\ref{maxW}) hold. With probability one, the random set $Z(\mathcal{B})$ of (\ref{Z}) exists and is unique.
\end{proposition}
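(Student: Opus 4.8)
The plan is to establish the two assertions separately. Throughout, write $\Phi(B)=\sqrt{\lambda}\,W(B)-D(B)$ for the objective in (\ref{Z}) and recall the structure already put in place: $d$ is a genuine metric on $\mathcal{B}$ (one representative per equivalence class), it is the intrinsic metric of $W$ since $d(B,B')^{2}=\mathrm{Var}(W(B)-W(B'))=M(B\bigtriangleup B')$, and $d(B,B')>0$ whenever $B\neq B'$. In particular every increment $W(B)-W(B')$ with $B\neq B'$ is a nondegenerate centered Gaussian variable. The existence part is essentially a compactness/coercivity argument, while uniqueness is the genuinely delicate point.

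\textbf{Existence.} First I would fix $c>0$ and argue that $\Phi$ has a version with $d$-continuous sample paths on the compact space $(\mathcal{B}_{c},d)$. Since $d$ is the intrinsic metric of $W$ and $(\mathcal{B}_{c},d)$ is compact, hence totally bounded, the assumption (\ref{maxW}) that $\mathbb{E}\max_{B\in\mathcal{B}_{c}}W^{2}(B)<\eta_{1}c<\infty$ places $W$ in the bounded branch of the Gaussian dichotomy, so $W$ admits a version that is sample-bounded and $d$-uniformly continuous on $\mathcal{B}_{c}$. The drift is $d$-continuous as well: because $dD/dM$ is bounded on $\Sigma_{c}$ (with bound $c\sup_{S_\lambda}\max(f'_+,f'_-)$), one has $|D(B)-D(B')|\le\|dD/dM\|_{\infty,\Sigma_{c}}\,M(B\bigtriangleup B')=\|dD/dM\|_{\infty,\Sigma_{c}}\,d(B,B')^{2}$. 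Hence $\Phi$ is $d$-continuous on each $\mathcal{B}_{c}$ and attains its maximum there. To pass from the $\sigma$-compact $\mathcal{B}=\bigcup_{c}\mathcal{B}_{c}$ to a single compact piece I would use the coercivity of the drift. Fixing any reference set $B_{0}\in\mathcal{B}$, we have $\Phi(B_{0})>-\infty$ a.s., while for $B$ with $c(B)=t$ the bound (\ref{margin2}) and the monotonicity in $t$ of $M_{t}:=\max_{B\in\mathcal{B}_{t}}|W(B)|$ give
\[
\Phi(B)\le\sqrt{\lambda}\,M_{t}-\eta_{0}t^{2}.
\]
From (\ref{maxW}) and Markov's inequality, $\mathbb{P}(\sqrt{\lambda}\,M_{k+1}>\eta_{0}k^{2})\le \lambda\,\eta_{1}(k+1)/(\eta_{0}^{2}k^{4})$, which is summable, so by Borel--Cantelli there is an a.s.\ finite (random) $c_{0}$ with $\sqrt{\lambda}\,M_{t}-\eta_{0}t^{2}<\Phi(B_{0})$ for all $t\ge c_{0}$. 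Thus the argmax is contained in the compact set $\mathcal{B}_{c_{0}}$ and is therefore attained.

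\textbf{Uniqueness.} For any fixed pair $B\neq B'$ the event $\{\Phi(B)=\Phi(B')\}$ equals $\{\sqrt{\lambda}(W(B)-W(B'))=D(B)-D(B')\}$, which has probability zero because $W(B)-W(B')$ is nondegenerate Gaussian. The main obstacle is that the two competing maximizers are random and need not lie in any prescribed countable set, so this pairwise fact cannot simply be summed. I would circumvent this by a net-plus-anticoncentration argument on the a.s.\ compact piece $\mathcal{B}_{c_{0}}$: it suffices to show that for every $\epsilon>0$ the probability that $\Phi$ has two maximizers at $d$-distance $\ge\epsilon$ is $0$. Using the $d$-uniform continuity of $\Phi$ with modulus $\omega_{\Phi}$, any such pair lies within $\delta$ of points $N_{1},N_{2}$ of a finite $\delta$-net $\mathcal{N}_{\delta}$ with $d(N_{1},N_{2})\ge\epsilon-2\delta$ and $|\Phi(N_{1})-\Phi(N_{2})|\le 2\omega_{\Phi}(\delta)$. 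For each fixed net pair the increment $\sqrt{\lambda}(W(N_{1})-W(N_{2}))-(D(N_{1})-D(N_{2}))$ is Gaussian with standard deviation $\sqrt{\lambda}\,d(N_{1},N_{2})\ge\sqrt{\lambda}(\epsilon-2\delta)$, so by Gaussian anticoncentration it lands in an interval of length $4\omega_{\Phi}(\delta)$ with probability $O(\omega_{\Phi}(\delta)/\epsilon)$; the union bound over the $O(|\mathcal{N}_{\delta}|^{2})$ pairs should then vanish as $\delta\downarrow0$, whence a.s.\ uniqueness on letting $\epsilon\downarrow0$ along a sequence.

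The crux of this last step—and the hardest part of the proposition—is making the union bound vanish, which requires the cardinality $|\mathcal{N}_{\delta}|$ of the net and the (random) modulus $\omega_{\Phi}(\delta)$ to be controlled simultaneously; this is exactly where the entropy information in $H_{1}$ (the Donsker-type integrals or the VC property) enters. An alternative, and perhaps cleaner, route is to invoke a general uniqueness theorem for the argmax of a continuous Gaussian process with nondegenerate increments perturbed by a continuous deterministic drift, which reduces in the univariate case to the classical results underlying \cite{G85}, \cite{DC99}, and \cite{BE06}; the nondegeneracy $d(B,B')>0$ and the $d$-continuity of $\Phi$ are precisely the hypotheses such a theorem needs.
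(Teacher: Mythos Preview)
Your existence argument is essentially the paper's Lemma \ref{Wcfinite}, repackaged via Borel--Cantelli over integers rather than the paper's dyadic sum over $c^{2^m}$. Two small corrections are needed. First, you claim $dD/dM$ is bounded on $\Sigma_c$, which would require $f'_{\pm}$ to be bounded on $S_\lambda$; $H_2$ does not give this. What (\ref{f+})--(\ref{f-}) yield (cf.\ (\ref{supfinite})) is $dD/dM\in L^1(\Sigma_c,M)$, which still implies the $d$-continuity of $D$ by absolute continuity of the integral, as in Lemma \ref{driftincrement}. Second, your Borel--Cantelli step only delivers $\sqrt{\lambda}M_t-\eta_0 t^2\le 0$, not a strict inequality; replace the threshold $\eta_0 k^2$ by $\tfrac12\eta_0 k^2$ to obtain $\sup_{c(B)\ge t}\Phi(B)\to-\infty$, which then forces the argmax into a compact $\mathcal{B}_{c_0}$.

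For uniqueness, the paper does exactly your ``alternative, cleaner route'': it invokes Lemma 2.6 of \cite{KP90}, which states that a Gaussian process with $d$-continuous paths on a $\sigma$-compact metric space whose increments have strictly positive variance almost surely attains its supremum at a unique point. Since $D$ is deterministic, $\Phi=\sqrt{\lambda}W-D$ is Gaussian with $\mathrm{Var}(\Phi(B)-\Phi(B'))=\lambda\, d(B,B')^2>0$ for $B\ne B'$ (by the equivalence-class convention), and the lemma applies directly on each compact $\mathcal{B}_c$.

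Your primary net-plus-anticoncentration argument, by contrast, does not close. The modulus $\omega_\Phi(\delta)$ is random and correlated with the increments $W(N_1)-W(N_2)$, so it cannot be inserted as a deterministic window in the Gaussian small-ball bound. Even after replacing it by a deterministic envelope $\bar\omega(\delta)$ via Dudley and Borell--TIS, the union bound reads $|\mathcal{N}_\delta|^2\cdot O(\bar\omega(\delta)/\epsilon)$; for polynomial covering numbers $N(\delta)\asymp\delta^{-k}$ (the VC case) one has $\bar\omega(\delta)\asymp\delta\sqrt{\log(1/\delta)}$, and the product behaves like $\delta^{1-2k}\sqrt{\log(1/\delta)}$, which diverges whenever $k>1/2$. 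The entropy conditions in $H_1$ do not rescue this. So the direct approach would require a genuinely different idea; the paper sidesteps it entirely by quoting \cite{KP90}.
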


We are now ready to state our non-standard weak convergence result for the sequence of random sets  $L_{1,n}$  in (\ref{en}). 

\begin{theorem}\label{thL1}
Assume that 
$H_1$, $H_2$, and (\ref{maxW}) hold. Then on some probability space there exists a triangular array $X_{n,1}, \ldots, X_{n,n}$, $n\in \mathbb{N},$ of rowwise independent
random vectors with law $P$ on $\mathbb{R}^{d}$ together with a sequence $Z_n(\mathcal{B})$ of versions of
$Z(\mathcal{B})$  such that for every argmax $L_{1,n}$ of (\ref{en}), as $n\to\infty$,
\begin{align*}
&M\left(  \tau_{n^{-1/3}}(L_{1,n}\bigtriangleup L_{\lambda
})\bigtriangleup Z_n(\mathcal{B})\right)    \stackrel{\mathbb{P}}{\to}0,\\
 &n^{1/3}\mu\left(  L_{1,n}
\bigtriangleup\varphi_{n^{-1/3}}(Z_n(\mathcal{B}))\right)  \stackrel{\mathbb{P}}{\to}0,\\
 &n^{1/3}P\left(  L_{1,n}
\bigtriangleup\varphi_{n^{-1/3}}(Z_n(\mathcal{B}))\right)   \stackrel{\mathbb{P}}{\to}0.
\end{align*}
\end{theorem}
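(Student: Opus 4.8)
The plan is to follow the Kim--Pollard cube-root paradigm, adapted to the set-valued cylinder setting of Section 2, in four stages: a cube-root concentration bound, localization and rescaling of the excess-mass criterion, a strong approximation by the drifted Wiener functional, and an argmax continuous-mapping step. Throughout, the elementary identity
\[
\bigl(P_n(A)-\lambda\mu(A)\bigr)-\bigl(P_n(L_\lambda)-\lambda\mu(L_\lambda)\bigr)=(P_n-P)\bigl(1_{A\setminus L_\lambda}-1_{L_\lambda\setminus A}\bigr)-\int_{L_\lambda\bigtriangleup A}|f-\lambda|\,d\mu
\]
splits the criterion into a centered empirical part and the negative excess risk.

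First I would show that $L_{1,n}$ concentrates at the cube-root rate, $n^{1/3}d_H(L_\lambda,L_{1,n})=O_{\mathbb P}(1)$. Consistency $d_H(L_\lambda,L_{1,n})\to 0$ follows from the separation condition (\ref{mass}) combined with the uniform bound (\ref{gc}) (or the entropy condition (\ref{arn}), or (\ref{VC})) on $\sup_{A\in\mathcal A}|P_n(A)-P(A)|$. The rate is then obtained by a peeling argument over dyadic Hausdorff shells: on $\{d_H(L_\lambda,A)\le\delta\}$ the margin condition (\ref{margin}) bounds the excess risk below by $\eta_0\delta^2$, while a maximal inequality (using (\ref{arn}) or the VC property, and the envelope $1_{S_\lambda^\delta}$ of $P$-mass $O(\delta)$) bounds the empirical fluctuation by $O(n^{-1/2}\delta^{1/2})$; balancing the drift $\delta^2$ against the fluctuation $n^{-1/2}\delta^{1/2}$ forces $\delta\lesssim n^{-1/3}$. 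Hence, up to a probability that is small uniformly in $n$ once $c$ is large, one may restrict to $A\in\mathcal A^{cn^{-1/3}}$, equivalently to $B=\tau_{n^{-1/3}}(A\bigtriangleup L_\lambda)\in\mathcal B_{c,n}$.

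Next I would localize and rescale. Multiplying the displayed criterion by $n^{2/3}$ and indexing by the magnified set $B$, the deterministic term $n^{2/3}\int_{L_\lambda\bigtriangleup A}|f-\lambda|\,d\mu$ converges to $D(B)$ by the second-order expansions (\ref{f+})--(\ref{f-}) together with the coarea relation $n^{1/3}\mu(C)\approx M(\tau_{n^{-1/3}}(C))$ valid on thin parallel shells $C\subset S_\lambda^{cn^{-1/3}}$, whereas $n^{2/3}$ times the centered empirical term is precisely the boundary local empirical process of \cite{K07,EK11} evaluated at $B$ (the minus sign on the inner part being absorbed into a reflected version of the limit, which has the same covariance $M(\cdot\cap\cdot)$). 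The crux is a strong approximation: on a common probability space carrying the triangular array $X_{n,1},\dots,X_{n,n}$, I would approximate this process uniformly over $\mathcal B_c$ by $\sqrt\lambda\,W$, the factor $\sqrt\lambda$ arising because $f\equiv\lambda$ on $S_\lambda$ fixes the local intensity. This is where the manageability hypotheses (\ref{arn}), (\ref{gc}), (\ref{brn}) or (\ref{VC}) and pointwise measurability enter, yielding, for each fixed $c$, that the rescaled empirical criterion converges in probability, uniformly over $B\in\mathcal B_c$, to $\sqrt\lambda\,W(B)-D(B)$.

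Finally I would apply an argmax continuous mapping. Taking $Z_n(\mathcal B)$ to be the argmax of $\sqrt\lambda\,W-D$ for the coupled Wiener process --- a version of $Z(\mathcal B)$, well defined and unique by Proposition \ref{Zwelldefined} --- the quadratic lower bound (\ref{margin2}), the tail control (\ref{maxW}), and the compactness of each $(\mathcal B_c,d)$ guarantee a well-separated maximum, so that every near-maximizer of the rescaled empirical criterion is forced $d$-close to $Z_n(\mathcal B)$; the stage-one rate confines $\tau_{n^{-1/3}}(L_{1,n}\bigtriangleup L_\lambda)$ to $\mathcal B_{c,n}$, and (\ref{unif}) lets me compare $\mathcal B_{c,n}$ with $\mathcal B_c$, letting $c\to\infty$ last. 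Since $d^2(\cdot,\cdot)=M(\cdot\bigtriangleup\cdot)$, this proves the first assertion. The remaining two follow by transferring back through $\varphi_{n^{-1/3}}$: the coarea relation gives $n^{1/3}\mu(C)\approx M(\tau_{n^{-1/3}}(C))$, and $f\approx\lambda$ near $S_\lambda$ upgrades the $\mu$-statement to the $P$-statement. The principal obstacle I anticipate is the strong approximation step: producing an error that is $o_{\mathbb P}(1)$ after magnification by $n^{2/3}$ and uniformly over the family indexed by $c$, while simultaneously making the set-valued argmax continuity rigorous in the semi-metric space $(\mathcal B,d)$, where the maximizers are only equivalence classes of sets at $d$-distance zero.
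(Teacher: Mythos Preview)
Your proposal is correct and follows essentially the same route as the paper: the identity you display, the peeling concentration bound (Lemmas \ref{subs}--\ref{concentration}), the rescaling of the criterion to $w_n(B)-D_n(B)$ via (\ref{f+})--(\ref{f-}) and the Steiner formula (Lemmas \ref{rnSteiner}--\ref{driftincrement}), the strong approximation on the cylinder space from \cite{EK11} (Lemma \ref{embedding}), and the argmax argument using Lemma 2.6 of \cite{KP90} together with Lemma \ref{Wcfinite} are exactly what the paper does. The only point you leave implicit is that the coupled Wiener process, and hence $Z(\mathcal B)$, depends on $c=c(\delta)$, so the paper extracts the \emph{sequence} $Z_n(\mathcal B)$ by a diagonal selection over $\delta\downarrow 0$; your ``letting $c\to\infty$ last'' is the right intuition but should be made explicit.
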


Theorem \ref{thL1} states that, at the scale $n^{-1/3}$, the symmetric difference between the
empirical excess mass  set and $L_\lambda$ has as a limiting distribution that of the argmax of a drifted Wiener process, as defined in (\ref{Z}). 

\subsection{Convergence of  the  minimum volume set and the maximum probability set estimators}
For the second main result about  $L_{2,n}$ and $L_{3,n}$ we need some more notation and assumptions. \smallskip

\noindent\textbf{The limiting class $\mathcal{B}^*$.}
Write $B^+=B\cap(Nor(L_\lambda)\times \mathbb{R}^+)$ and $B_-=B\setminus B^+$, for $B\in\mathcal{F}$.
Now define 
$$ \mathcal{B}^*=\{B\in \mathcal{B}: M(B^+)=M(B^-)\}, \quad \mathcal{B}_c^*= \mathcal{B}^* \cap \mathcal{B}_c.$$
Note that $(\mathcal{B}_c^*,d)$ is also compact.
By replacing $\mathcal{A}$ in (\ref{Aeps})--(\ref{bcn}) with
\begin{equation}
    \label{AvAp}
    \mathcal{A}_v=\{ A \in \mathcal{A} : \mu(A)=v_\lambda \},\quad \mathcal{A}_p=\{ A \in \mathcal{A} : P(A)=p_\lambda \},
\end{equation}
respectively, we define in the same way the classes $\mathcal{A}_v^\varepsilon$, $\mathcal{C}_v^\varepsilon$, $\mathcal{A}_p^\varepsilon$, $\mathcal{C}_p^\varepsilon$ and  $\mathcal{B}^v_{c,n}$, $\mathcal{B}^p_{c,n}$.
We 
assume
\begin{align}
\label{Bv_dense}
\lim_{n\rightarrow \infty}\sup_{B\in\mathcal{B}^*_{c}}\inf_{B_n\in\mathcal{B}^v_{c,n}}d(B,B_n)&=0,\\
\label{Bp_dense}
\lim_{n\rightarrow \infty}\sup_{B\in\mathcal{B}^*_{c}}\inf_{B_n\in\mathcal{B}^p_{c,n}}d(B,B_n)&=0.
\end{align}
Consider the Wiener process $W$ indexed by $\mathcal{B}^*$ and define
\begin{equation*}
Z(\mathcal{B}^*)=\underset{B\in\mathcal{B}^{*}}{\arg\max
}\left\{ \sqrt{\lambda} W(B)-D(B)\right\}\label{Z*}.
\end{equation*}
As in Proposition \ref{Zwelldefined},
under 
$H_1$, $H_2$ and (\ref{maxW}), with probability one $Z(\mathcal{B}^*)$ exists and is unique.

In order to control the minimum volume set  estimator  we need the following two conditions.
The class $\mathcal{A}$ contains a ``univariate" subset
\begin{equation}\label{uni} \mathcal{A}_l=\{A_s\in \mathcal{A}: s\in (-p_\lambda, 1-p_\lambda), P(A_s)=p_\lambda+s\}\end{equation}
with the properties that $A_s\subset A_{s'}$ for $s<s'$, $ A_0=\L$, and for some $s_0>0, \zeta>0$ and for all 
$-s_0\leq s\leq s_0$: \begin{equation}\label{uni2} e_\lambda-e_\lambda(A_s)\leq \zeta s^2.\end{equation}

For every $c>0$, we have as $n \to\infty$,
\begin{equation}\label{clo}\sup_{A\in \mathcal{A}^{cn^{-1/3}}, P(A)=p_\lambda}\ \ \ \inf_{\tilde A\in \mathcal{A}^{cn^{-1/3}} ,P_n(\tilde A)=\lceil np_\lambda\rceil/n} d( \tau_\varepsilon(A), \tau_\varepsilon(\tilde A))  \stackrel{\mathbb{P}}{\to} 0.\end{equation}

\begin{theorem}\label{thL23}
Assume that
$H_{1}$, $H_{2}$, (\ref{maxW}), (\ref{Bv_dense}) - (\ref{clo}) hold. Then on some probability space there exists a triangular array $X_{n,1}, \ldots, X_{n,n},$ $n\in \mathbb{N},$ of rowwise independent
random vectors with law $P$ on $\mathbb{R}^{d}$ together with a sequence $Z_n(\mathcal{B}^*)$ of versions of $Z(\mathcal{B}^*)$ such that every argmin $L_{2,n}$ of (\ref{vn}) and every argmax $L_{3,n}$ of (\ref{pn}) satisfy, for $j=2,3$, as $n\to\infty$,
\begin{align*}
&M\left(  \tau_{n^{-1/3}}(L_{j,n}\bigtriangleup L_{\lambda
})\bigtriangleup Z_n(\mathcal{B}^*)\right)    \stackrel{\mathbb{P}}{\to}0,\\
 &n^{1/3}\mu\left(  L_{j,n}
\bigtriangleup\varphi_{n^{-1/3}}(Z_n(\mathcal{B}^*))\right)  \stackrel{\mathbb{P}}{\to}0,\\
 &n^{1/3}P\left(  L_{j,n}
\bigtriangleup\varphi_{n^{-1/3}}(Z_n(\mathcal{B}^*))\right)   \stackrel{\mathbb{P}}{\to}0.
\end{align*}
\end{theorem}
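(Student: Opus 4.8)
The plan is to recast both constrained problems as the excess mass maximization of Theorem \ref{thL1}, but now run over the balanced limit class $\mathcal{B}^{*}$. The starting point is that on their respective constraint surfaces the two criteria coincide with the empirical excess mass $e_{\lambda,n}(A)=P_{n}(A)-\lambda\mu(A)$: when $P_{n}(A)$ is pinned at $\lceil np_{\lambda}\rceil/n$, minimizing $\mu(A)$ is the same as maximizing $e_{\lambda,n}(A)$, and when $\mu(A)$ is pinned at $v_{\lambda}$, maximizing $P_{n}(A)$ is the same as maximizing $e_{\lambda,n}(A)$. The constraints bind at the optimum---for $L_{2,n}$ by the assumption $P_{n}(L_{2,n})=\lceil np_{\lambda}\rceil/n$, for $L_{3,n}$ by enlarging via the nested class (\ref{nup}) without decreasing $P_{n}$---so that $L_{2,n}$ and $L_{3,n}$ are constrained versions of (\ref{en}). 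I would then import the three ingredients already in place for $L_{1,n}$: the cube-root concentration $d_{H}(L_{j,n},L_{\lambda})=O_{\mathbb{P}}(n^{-1/3})$ from (\ref{mass}) and (\ref{margin}), confining the analysis to $A\in\mathcal{A}^{cn^{-1/3}}$; the strong approximation of the boundary-local empirical process supplying the triangular array, the Wiener process $W$, and the versions $Z_{n}(\mathcal{B}^{*})$; and the uniform expansion $n^{2/3}(e_{\lambda,n}(A)-e_{\lambda,n}(L_{\lambda}))=\sqrt{\lambda}\,W_{n}(B)-D(B)+o_{\mathbb{P}}(1)$ with $B=\tau_{n^{-1/3}}(A\triangle L_{\lambda})$, uniformly over $\mathcal{A}^{cn^{-1/3}}$. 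The only genuinely new task is to prove that both constraints collapse in the limit onto the balance condition $M(B^{+})=M(B^{-})$ defining $\mathcal{B}^{*}$; the constrained argmax then converges to $Z(\mathcal{B}^{*})$ by the same argmax argument, with uniqueness and well-separation furnished by the $\mathcal{B}^{*}$-analogue of Proposition \ref{Zwelldefined}.

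For $L_{3,n}$ the constraint is deterministic and the translation is immediate. Since $n^{1/3}(\mu(A)-v_{\lambda})=M(B^{+})-M(B^{-})+o(1)$ uniformly over $\mathcal{A}^{cn^{-1/3}}$, the surface $\mu(A)=v_{\lambda}$ magnifies exactly onto $\{B:M(B^{+})=M(B^{-})\}$, and (\ref{Bv_dense}) guarantees that the empirical feasible images $\mathcal{B}^{v}_{c,n}$ fill out $\mathcal{B}^{*}_{c}$ in the limit, so that no approximating sequence is lost. Maximizing $\sqrt{\lambda}\,W_{n}(B)-D(B)$ over this family therefore yields $Z(\mathcal{B}^{*})$.

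The difficulty is concentrated in $L_{2,n}$, whose constraint is random. Writing $\delta_{n}=M(B_{n}^{+})-M(B_{n}^{-})$ for $B_{n}=\tau_{n^{-1/3}}(L_{2,n}\triangle L_{\lambda})$, the decomposition $P_{n}(A)-p_{\lambda}=(P_{n}-P)(A)+(P(A)-p_{\lambda})$, together with the $O_{\mathbb{P}}(n^{-2/3})$ size of the local empirical increment on sets inside $S_{\lambda}^{cn^{-1/3}}$ and the first-order relation $P(A)-p_{\lambda}\approx\lambda n^{-1/3}\delta_{n}$, shows only that $\delta_{n}=O_{\mathbb{P}}(n^{-1/3})\to0$: the empirical constraint surface fluctuates around $\mathcal{B}^{*}$ at scale $n^{-1/3}$ rather than lying on it. This is the main obstacle, and it is where (\ref{clo}) and the univariate family (\ref{uni})--(\ref{uni2}) enter. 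Given a population-feasible $A$ with $P(A)=p_{\lambda}$, adjusting along $A_{s}$ to meet the empirical level requires a probability shift $s=O_{\mathbb{P}}(n^{-2/3})$; by (\ref{uni2}) this changes the excess mass by at most $\zeta s^{2}=O_{\mathbb{P}}(n^{-4/3})$, hence by $o_{\mathbb{P}}(1)$ after the $n^{2/3}$ magnification, while it displaces the cylinder image by $M(\tau_{n^{-1/3}}(A_{s}\triangle A))\approx n^{1/3}|s|/\lambda=o_{\mathbb{P}}(1)$. Condition (\ref{clo}) supplies, uniformly over such $A$, a genuinely empirically-feasible $\tilde A$ with $P_{n}(\tilde A)=\lceil np_{\lambda}\rceil/n$ and $d(\tau_{n^{-1/3}}(A),\tau_{n^{-1/3}}(\tilde A))\to0$, along which the expanded objective varies by $o_{\mathbb{P}}(1)$ thanks to the continuity modulus of $W_{n}$ and of $D$. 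Running this correspondence in both directions identifies the cylinder limit of the argmax of $e_{\lambda,n}$ over $\{P_{n}(A)=\lceil np_{\lambda}\rceil/n\}$ with that over $\{P(A)=p_{\lambda}\}$, and by (\ref{Bp_dense}) the latter is again $Z(\mathcal{B}^{*})$.

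Once the first, cylinder-space convergence $M(\tau_{n^{-1/3}}(L_{j,n}\triangle L_{\lambda})\triangle Z_{n}(\mathcal{B}^{*}))\stackrel{\mathbb{P}}{\to}0$ is secured for $j=2,3$, the two remaining statements follow exactly as for $L_{1,n}$, using $n^{1/3}\mu(C)\approx M(\tau_{n^{-1/3}}(C))$ and $n^{1/3}P(C)\approx\lambda M(\tau_{n^{-1/3}}(C))$ for $C\subset S_{\lambda}^{cn^{-1/3}}$. Since $L_{2,n}$ and $L_{3,n}$ are driven to the common limit $Z(\mathcal{B}^{*})$, they are asymptotically indistinguishable and the joint conclusion for $j=2,3$ holds. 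The principal technical burden, as indicated, is the uniform feasibility-repair and coupling step for $L_{2,n}$: controlling the random constraint surface through (\ref{clo}) and (\ref{uni})--(\ref{uni2}) so that the $n^{-1/3}$ imbalance provably leaves the first-order argmax unperturbed.
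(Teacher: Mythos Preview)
Your overall strategy matches the paper's: recast $L_{2,n}$ and $L_{3,n}$ as constrained empirical excess-mass maximizers, import concentration and the boundary-process approximation from the $L_{1,n}$ analysis, and show the constraint surfaces collapse onto $\mathcal{B}^{*}$. A couple of points deserve correction, however.

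First, your use of (\ref{uni})--(\ref{uni2}) in the $L_{2,n}$ argument is misplaced. The family $\{A_{s}\}$ is a \emph{single} one-parameter chain through $L_{\lambda}$ (with $A_{0}=L_{\lambda}$), not a device to perturb an arbitrary population-feasible $A$ onto the empirical constraint surface. In the paper these conditions enter only in the concentration step (Lemma~\ref{concentration}): they produce one empirically-feasible competitor $A_{\hat s}$ close enough to $L_{\lambda}$ that the constrained supremum exceeds $-n^{-17/24}$, which is what allows the $L_{1,n}$ concentration argument to be replayed on the random subclass $\{A:nP_{n}(A)=\lceil np_{\lambda}\rceil\}$. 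The uniform feasibility-repair you describe is carried out by (\ref{clo}) alone; (\ref{uni})--(\ref{uni2}) do not give a chain around a general $A$.

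Second, your rate bookkeeping is off. For $A\in\mathcal{A}^{c\varepsilon_{n}}$ one has $(P_{n}-P)(A)=(P_{n}-P)(L_{\lambda})+O_{\mathbb{P}}(n^{-2/3})=O_{\mathbb{P}}(n^{-1/2})$, not $O_{\mathbb{P}}(n^{-2/3})$; only the \emph{increment} over $A\triangle L_{\lambda}$ is $O_{\mathbb{P}}(n^{-2/3})$. Consequently the empirically-feasible sets satisfy $|P(A)-p_{\lambda}|=O_{\mathbb{P}}(n^{-1/2})$, and the paper absorbs this by passing through the intermediate class $\mathcal{B}^{p,+}_{c,n}=\{\tau_{\varepsilon_{n}}(A\triangle L_{\lambda}):A\in\mathcal{A}^{c\varepsilon_{n}},\,|P(A)-p_{\lambda}|\le n^{-2/5}\}$, then using Lemma~\ref{steiner} and Lemma~\ref{compact} to land in $\mathcal{B}^{*}_{c}$. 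Your imbalance $\delta_{n}$ is therefore $O_{\mathbb{P}}(n^{-1/6})$, not $O_{\mathbb{P}}(n^{-1/3})$; harmless for the conclusion, but the mechanism you cite is wrong.

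For $L_{3,n}$ the paper is a bit more explicit than you are: it introduces $L_{4,n}\in\arg\max\{P_{n}(A):\mu(A)=v_{\lambda}\}$, proves the theorem for $L_{4,n}$ by a straight transcription of the $L_{1,n}$ proof with $(\mathcal{A}_{v},\mathcal{B}^{*})$ in place of $(\mathcal{A},\mathcal{B})$, and then shows $\mu(L_{3,n}\triangle L_{4,n})\le\lambda^{-1}n^{-2/5}=o(\varepsilon_{n})$ via $P(L_{3,n})\ge p_{\lambda}-n^{-2/5}$ and the excess-mass inequality. Your sketch via (\ref{nup}) is the same idea but skips this quantitative step.
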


Comparing Theorems \ref{thL1} and \ref{thL23} we see that the limiting behavior of $L_{2,n}$ and  $L_{3,n}$  is substantially ``less rich" than that of $L_{1,n}$. The symmetry of the sets in $\mathcal{B}^*$ shows that for $j=2,3$ the inner and outer differences $L_{j,n}\setminus L_{\lambda}$ and $L_{\lambda}\setminus L_{j,n}$ tend to compensate.
Theorems \ref{thL1} and \ref{thL23} could be stated jointly since they can indeed be proved  with the same sequence of underlying Wiener processes $W_n$.
It is beyond the scope of this paper to  study $Z(\mathcal{B})$ and $Z(\mathcal{B}^*)$
in more detail, to see which argmax is ``closer" to, say,  $\Sigma_0$  (corresponding to $L_\lambda$), that is, which estimator performs better. However, a small simulation study for one-dimensional data shows that in that case $L_{2,n}$ and  $L_{3,n}$ asymptotically outperform  $L_{1,n}$.

From the proof of Theorem \ref{thL23} it follows    that  the sequence of versions $Z_n(\mathcal{B}^*)$ can be chosen the same for $L_{2,n}$ and $L_{3,n}$. Hence, we obtain, as stated in the next result, that $L_{2,n}$ and  $L_{3,n}$  are asymptotically equivalent.

\begin{corollary}
\label{cor23} Under the assumption of Theorem \ref{thL23},
as $n\to\infty$,
\begin{align*}
M(\tau_{n^{-1/3}}(L_{2,n}\bigtriangleup L_{3,n}
)
)  \stackrel{\mathbb{P}}{\to}0,\\
 n^{1/3}\mu\left(  L_{2,n}\bigtriangleup L_{3,n}\right)  \stackrel{\mathbb{P}}{\to}0,\\
 n^{1/3}P\left(  L_{2,n}\bigtriangleup L_{3,n}\right)   \stackrel{\mathbb{P}}{\to}0.
\end{align*}
\end{corollary}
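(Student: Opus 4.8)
The plan is to obtain the corollary as a direct triangle-inequality consequence of Theorem~\ref{thL23}, the decisive input being the fact recorded just above the statement: the versions $Z_n(\mathcal{B}^*)$ in Theorem~\ref{thL23} may be chosen \emph{identical} for $j=2$ and $j=3$. The algebraic engine is the group structure of the symmetric difference. For any sets $A,B,C$,
\begin{equation*}
A\bigtriangleup B=(A\bigtriangleup C)\bigtriangleup(C\bigtriangleup B)\subseteq(A\bigtriangleup C)\cup(C\bigtriangleup B),
\end{equation*}
so that $m(A\bigtriangleup B)\le m(A\bigtriangleup C)+m(C\bigtriangleup B)$ for every nonnegative measure $m$. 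Throughout write $\varepsilon=n^{-1/3}$.

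For the $\mu$- and $P$-statements I would apply this with the common intermediary $C=\varphi_\varepsilon(Z_n(\mathcal{B}^*))$, $A=L_{2,n}$, $B=L_{3,n}$, and $m=\mu$ (resp.\ $m=P$). This yields
\begin{equation*}
n^{1/3}\mu(L_{2,n}\bigtriangleup L_{3,n})\le n^{1/3}\mu\bigl(L_{2,n}\bigtriangleup\varphi_\varepsilon(Z_n(\mathcal{B}^*))\bigr)+n^{1/3}\mu\bigl(L_{3,n}\bigtriangleup\varphi_\varepsilon(Z_n(\mathcal{B}^*))\bigr),
\end{equation*}
and the analogous bound for $P$. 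Because the same $Z_n(\mathcal{B}^*)$ serves both $L_{2,n}$ and $L_{3,n}$, the two terms on each right-hand side tend to $0$ in probability by the second and third displays of Theorem~\ref{thL23}, giving these two statements.

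For the $M$-statement I would first commute $\tau_\varepsilon$ with the symmetric difference. Since $\tau_\varepsilon$ is injective on $\mathbb{R}^d\setminus(L_\lambda^*\cup S_\lambda^*)$ and that set is removed in the very definition of $\tau_\varepsilon(\cdot)$, the standard identity ``injective image of a symmetric difference equals the symmetric difference of the images'' gives, exactly, $\tau_\varepsilon(C\bigtriangleup C')=\tau_\varepsilon(C)\bigtriangleup\tau_\varepsilon(C')$. Taking $C=L_{2,n}\bigtriangleup L_\lambda$, $C'=L_{3,n}\bigtriangleup L_\lambda$ and noting $C\bigtriangleup C'=L_{2,n}\bigtriangleup L_{3,n}$, this identifies $\tau_\varepsilon(L_{2,n}\bigtriangleup L_{3,n})$ with $\tau_\varepsilon(L_{2,n}\bigtriangleup L_\lambda)\bigtriangleup\tau_\varepsilon(L_{3,n}\bigtriangleup L_\lambda)$. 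Inserting the common intermediary $Z_n(\mathcal{B}^*)$ and using subadditivity of $M$ then bounds
\begin{equation*}
M\bigl(\tau_\varepsilon(L_{2,n}\bigtriangleup L_{3,n})\bigr)\le M\bigl(\tau_\varepsilon(L_{2,n}\bigtriangleup L_\lambda)\bigtriangleup Z_n(\mathcal{B}^*)\bigr)+M\bigl(\tau_\varepsilon(L_{3,n}\bigtriangleup L_\lambda)\bigtriangleup Z_n(\mathcal{B}^*)\bigr),
\end{equation*}
whose right-hand side vanishes in probability by the first display of Theorem~\ref{thL23}.

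Since the substantive work is already carried out in Theorem~\ref{thL23}, the proof of the corollary is essentially formal; the only point meriting care is the $\tau_\varepsilon$-commutation in the preceding paragraph, which rests solely on the injectivity of $\tau_\varepsilon$ off the $\mu$-null skeleton set (the same injectivity underlying the relation $\tau_\varepsilon^{-1}(\tau_\varepsilon(C))=C\setminus(L_\lambda^*\cup S_\lambda^*)$ noted in Subsection~\ref{21}). Accordingly, the genuine difficulty lies not here but in the input from Theorem~\ref{thL23} that a \emph{single} sequence of versions $Z_n(\mathcal{B}^*)$ can be coupled simultaneously to both estimators; granting that, the three convergences follow at once from the triangle inequality.
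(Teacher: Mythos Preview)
Your argument is correct and is exactly the approach the paper intends: the paper does not spell out a separate proof but simply notes, just before the corollary, that a single sequence $Z_n(\mathcal{B}^*)$ can be used for both $L_{2,n}$ and $L_{3,n}$, from which the result follows. You have supplied the missing details correctly, including the $\tau_\varepsilon$-commutation for the $M$-statement, which indeed reduces to injectivity of $\tau_\varepsilon$ on $\mathbb{R}^d\setminus(L_\lambda^*\cup S_\lambda^*)$ together with the identity $(A\bigtriangleup C)\cap E=(A\cap E)\bigtriangleup(C\cap E)$.
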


\smallskip

\subsection{Discussion and examples}\label{almost}
The conditions on the class $\mathcal{A}$ are such that natural classes, like in particular the class of all closed ellipsoids, are included. If the class is ``small", e.g., by allowing not all  or only a few positive values for $\mu(A)$ or   for  $P(A)$ we can obtain  pathological and/or degenerate behavior of the set-valued estimators.
E.g., if $\mathcal{A}$ contains $L_\lambda$ and further only sets with $\mu(A)>v_\lambda$, then $L_{3,n}=L_\lambda$.

The assumptions in (\ref{f+}) and (\ref{f-}) consider the ``most regular" behavior of the density $f$ near $S_\lambda$. They lead to the cube root asymptotics in this paper. Faster or slower convergence rates are also possible, see, e.g., \cite{P95}. 
This would lead to $W$ drifted by a non-quadratic measure on the cylinder space, generalizing $W$ drifted by a convex power function used in \cite{BE06} to control the estimation of the shorth, the minimum volume convex set on the real line.
It is the goal of the present paper, however, to reveal the asymptotic theory in the most regular setup, and not to present the most general results under the weakest assumptions.


\noindent 

\smallskip
We now present some specific examples of classes of sets and probability distributions where the three level set estimators can be used.
\smallskip 

\noindent\textbf{Ellipsoids.} The natural and most studied example is the case where $\mathcal{A}$ is the class of all closed ellipsoids with non-empty interior and $P$ is an elliptical probability distribution. More, relevant details about this class of sets for the bivariate case when $L_\lambda$ is the unit disc are given in Example 1a in \cite{EK11}. In particular $\mathcal{B}_c$ is determined therein. A more restricted class is the class of all closed balls.
\smallskip


\noindent\textbf{Convex polytopes.} Another natural choice for $\mathcal{A}$ is the class of all closed, convex polytopes. In particular in  dimension 2, the class of all closed, convex quadrangles can be considered. In this case we could take a density $f$ such that $L_\lambda$ is a  rectangle. An interesting difference with the previous example is that $L_\lambda$ is non-smooth here, resulting, e.g., in a non-empty skeleton $L_\lambda^*$.
\smallskip

\noindent\textbf{Planar convex sets.} For dimension two, we can let $\mathcal{A}$ be the class of all closed, convex sets. Since this class is much larger than those in the previous examples, the restriction on $f$ that $L_\lambda$ is a convex body is much weaker now.
For this and the previous example, see again \cite{EK11}, Example 2,  for more details; in particular $\mathcal{B}_c$ is determined therein in case $L_\lambda$ is the unit square.

\smallskip

It might be difficult to determine
$L_{1,n}$, $L_{2,n}$ and $L_{3,n}$ and therefore some more flexibility in their definitions could be convenient.
Consider for instance the following ``relaxed" maximizers/minimizers:  given any  sequence 
$\delta _{n}$ of positive numbers converging to 0, choose random  sets $R_{1,n}$, $R_{2,n}$, and $R_{3,n}$ in $\mathcal{A}$  such that $P_{n}(R_{2,n})\geq p_{\lambda }$, $\mu
(R_{3,n})\leq v_{\lambda }$, and 
\begin{eqnarray*}
&&P_{n}(R_{1,n})-\lambda \mu (R_{1,n}) \geq \sup \{P_{n}(A)-\lambda \mu (A):A\in 
\mathcal{A}\}-\delta _{n}n^{-2/3}, \\
&&\mu (R_{2,n}) \leq \inf \{\mu (A):A\in \mathcal{A},P_{n}(A)\geq p_{\lambda
}\}+\delta _{n}n^{-2/3}, \\
&&P_{n}(R_{3,n}) \geq \sup \{P_{n}(A):A\in \mathcal{A},\mu (A)\leq v_{\lambda
}\}-\delta _{n}n^{-2/3}.
\end{eqnarray*}
Our approach and convergence results naturally   extend to $R_{j,n}, j=1,2,3$, but their detailed analysis is beyond the scope of this paper.  Whenever $\delta_n$ is chosen not too small (i.e., $\delta_n n^{1/3}\to \infty$) more flexible algorithms for the computation of $R_{j,n}$ could be used.

\color{black}


\section{Proofs} 
We first collect various  lemmas for the proof of the theorems.  From now on we write $\varepsilon_n=n^{-1/3}$.

\subsection{Distances, measures and drift}
\label{geometry}
For $j=1,...,d$,  let $\nu _{d-j}(\cdot )$ denote the $j$-th support measure of $L_{\lambda }$ on $Nor(L_{\lambda })$,  see \cite{S93} and \cite{SW08}. These finite measures carry  the geometrical information about $L_\lambda$. 
The local inner reach  at $\pi \in S_\lambda$ is the largest radius $r(\pi)$ of a ball  
included in $L_{\lambda }$ that has $\pi $ as a boundary point. Theorem 1\ in \cite{KW08} states a general Steiner formula for convex bodies: for any $g\in L_{1}(\mu )$,
\begin{eqnarray}
&&\int_{\mathbb{R}^{d}}g(x)d\mu (x) =\sum_{j=1}^{d}\binom{d-1}{j-1}\Theta
_{d-j}(g),  \label{SteinerGlob} \mbox{ where} \\
&&\Theta _{d-j}(g) =\int_{Nor(L_{\lambda })}\int_{-r(\pi )}^{\infty
}s^{j-1}g(\pi +su)d\mu_{1}(s)d\nu _{d-j}(\pi ,u).  \label{SupportMeasure}
\end{eqnarray}
It follows from (\ref{f+})-(\ref{f-}) and this Steiner formula with $g=f 1_{S_\lambda^\varepsilon}$, for small $\varepsilon>0$, that
\begin{equation}\label{supfinite}
    \int_{Nor(L_{\lambda })}f_{\pm }^{\prime }(\pi )\nu
_{d-j}(\pi ,u)<\infty, \
 \mbox{ for }  j=1,...,d.  \end{equation}
 Define
\begin{equation}\label{def+} \mathcal{B}_{c,n}^{p,+}=\{\tau_{\varepsilon_n}(A\bigtriangleup L_\lambda): A \in \mathcal{A}^{c\varepsilon_n},  | P(A)- p_\lambda|\leq n^{-2/5} \}.\end{equation}

\begin{lemma}\label{steiner} 
Let $c>0$. We have, as $n \to \infty$, 
\begin{equation}
\label{volume}
    \sup_{B_n \in \mathcal{B}^v_{c,n}}\left\vert M(B_n^+)-M(B_n^-)\right\vert = O(\varepsilon _n)
\end{equation}
and, if (\ref{f+}) - (\ref{f-}) hold, then
\begin{equation}
\label{symmetry}
    \sup_{B_n \in \mathcal{B}^{p,+}_{c,n}}\left\vert M(B_n^+)-M(B_n^-)\right\vert \to 0.
\end{equation}
\end{lemma}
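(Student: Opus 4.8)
The plan is to reduce both statements to the Steiner formula (\ref{SteinerGlob})--(\ref{SupportMeasure}), exploiting that the first support measure $\nu_{d-1}$ is exactly the surface measure underlying $M$. First I would establish the basic identity that, for any Borel $C\subset S_\lambda^{c\varepsilon_n}$, one has $M(\tau_{\varepsilon_n}(C))=\varepsilon_n^{-1}\Theta_{d-1}(1_C)$: indeed, unrolling $\tau_{\varepsilon_n}$ in normal coordinates $x=\pi+su$ and substituting $s=\varepsilon_n t$ turns the $M$-integral over the third coordinate into the $j=1$ term of (\ref{SupportMeasure}), the Jacobian producing the factor $\varepsilon_n^{-1}$ (skeleton and multiple-normal points are discarded, being $\mu$- and $\nu_{d-1}$-null). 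Applying this to $C=A\setminus L_\lambda$ (which yields $B_n^+$, where $s>0$) and to $C=L_\lambda\setminus A$ (which yields $B_n^-$, where $s\le 0$) gives $\varepsilon_n M(B_n^+)=\Theta_{d-1}(1_{A\setminus L_\lambda})$ and $\varepsilon_n M(B_n^-)=\Theta_{d-1}(1_{L_\lambda\setminus A})$.

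Next I would expand the volumes via the full Steiner formula, $\mu(A\setminus L_\lambda)=\Theta_{d-1}(1_{A\setminus L_\lambda})+\sum_{j=2}^d\binom{d-1}{j-1}\Theta_{d-j}(1_{A\setminus L_\lambda})$, and observe that for $A\in\mathcal{A}^{c\varepsilon_n}$ the integrand in each $j\ge2$ term is supported on $\{|s|\le c\varepsilon_n\}$, so that $|\Theta_{d-j}(1_{A\setminus L_\lambda})|\le (c\varepsilon_n)^j\nu_{d-j}(Nor(L_\lambda))=O(\varepsilon_n^2)$ uniformly, the support measures being finite. Hence $\varepsilon_n M(B_n^+)=\mu(A\setminus L_\lambda)+O(\varepsilon_n^2)$ and likewise $\varepsilon_n M(B_n^-)=\mu(L_\lambda\setminus A)+O(\varepsilon_n^2)$. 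For (\ref{volume}), the constraint $\mu(A)=v_\lambda$ forces $\mu(A\setminus L_\lambda)=\mu(L_\lambda\setminus A)$, so subtracting gives $\varepsilon_n|M(B_n^+)-M(B_n^-)|=O(\varepsilon_n^2)$, i.e.\ $O(\varepsilon_n)$ after dividing, uniformly over $\mathcal{B}^v_{c,n}$. Note this part uses only the Steiner decomposition and the exact volume balance, not (\ref{f+})--(\ref{f-}).

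For (\ref{symmetry}) the exact volume identity is unavailable, so I would instead convert the probability constraint into a volume imbalance. Writing $P(A)-p_\lambda=\int_{A\setminus L_\lambda}(f-\lambda)\,d\mu-\int_{L_\lambda\setminus A}(f-\lambda)\,d\mu+\lambda\big(\mu(A\setminus L_\lambda)-\mu(L_\lambda\setminus A)\big)$, I would use (\ref{f+})--(\ref{f-}) to replace $f-\lambda$ by $-s(x)f'_\pm(\Pi(x))$ up to an error that integrates to $o(\varepsilon_n^2)$ over the shell $S_\lambda^{c\varepsilon_n}$, and then a further Steiner/shell-width estimate, using $\int_{Nor(L_\lambda)}f'_\pm\,d\nu_{d-1}<\infty$ from (\ref{supfinite}) together with $0<s\le c\varepsilon_n$, to bound $\int_{A\setminus L_\lambda}s f'_+\,d\mu$ and its inner counterpart by $O(\varepsilon_n^2)$. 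Thus each density-correction integral is $O(\varepsilon_n^2)$, while by definition of $\mathcal{B}^{p,+}_{c,n}$ we have $|P(A)-p_\lambda|\le n^{-2/5}=o(\varepsilon_n)$ since $n^{-2/5}/\varepsilon_n=n^{-1/15}\to0$. Hence $\lambda\big(\mu(A\setminus L_\lambda)-\mu(L_\lambda\setminus A)\big)=o(\varepsilon_n)$, and combining with the first-order identity of the previous paragraph yields $\varepsilon_n|M(B_n^+)-M(B_n^-)|=o(\varepsilon_n)$, i.e.\ the supremum tends to $0$.

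The main obstacle I anticipate is uniformity together with careful bookkeeping of orders. Every estimate must hold uniformly over $A\in\mathcal{A}^{c\varepsilon_n}$; this is what makes replacing the $A$-dependent integrals by shell-wide bounds (which are $A$-free) the crucial manoeuvre, and it is why (\ref{f+})--(\ref{f-}) are phrased as integrals over the whole parallel set rather than pointwise. The delicate point in (\ref{symmetry}) is that the argument only closes because the admissible tolerance $n^{-2/5}$ is of smaller order than the magnification scale $\varepsilon_n=n^{-1/3}$; one must track that, after dividing by $\varepsilon_n$, both this tolerance and the $O(\varepsilon_n^2)$ second-order contributions vanish. Minor care is also needed at the reach boundary and at skeleton/multiple-normal points, but these are $\nu_{d-1}$- and $\mu$-null and do not affect the identities.
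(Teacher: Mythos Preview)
Your proposal is correct and follows essentially the same route as the paper's proof: both apply the Steiner formula (\ref{SteinerGlob})--(\ref{SupportMeasure}) to identify $\varepsilon_n M(B_n^\pm)$ with the $j=1$ term $\Theta_{d-1}$ of the expansion of $\mu(A\setminus L_\lambda)$ and $\mu(L_\lambda\setminus A)$, bound the $j\ge 2$ remainders by $O(\varepsilon_n^2)$ uniformly, and then use the exact volume balance for (\ref{volume}) and the probability decomposition combined with (\ref{f+})--(\ref{f-}) and (\ref{supfinite}) for (\ref{symmetry}). One minor remark: to bound $\int_{A\setminus L_\lambda} s f'_+\,d\mu=O(\varepsilon_n^2)$ via the full Steiner expansion you need (\ref{supfinite}) for all $j=1,\dots,d$, not just $\nu_{d-1}$, since the higher-order terms $\Theta_{d-j}$ also involve $\int f'_+\,d\nu_{d-j}$; this is exactly how the paper proceeds.
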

\begin{proof} 
For $A_{n}\in \mathcal{A}^{c\varepsilon _{n}}$ and $B_{n}=\tau _{\varepsilon_{n}}(A_{n}\bigtriangleup L_{\lambda })\in \mathcal{B}_{c,n}$, we have $B_{n}^{+}=\tau _{\varepsilon _{n}}(A_{n}\setminus L_{\lambda })$ and $B_{n}^{-}=\tau _{\varepsilon _{n}}(L_{\lambda }\setminus A_{n})$. Consider $$g^+(x)=\varepsilon_{n}^{-1}1_{A_{n}\setminus L_{\lambda }}(x)=\varepsilon_{n}^{-1}1_{B_{n}^{+}}(\Pi(x), u(x),s(x)/\en )$$
in (\ref{SteinerGlob}). Then  
$$\Theta _{d-j}(g^+) =\varepsilon _{n}^{j-1}\int_{Nor(S_{\lambda})}\int_{0}^{c}s^{j-1}1_{B_{n}^{+}}(\pi, u, s)d\mu_{1}(s)d\nu _{d-j}(\pi ,u).$$
Thus by (\ref{m}), $\Theta _{d-1}(g^+)=M(B_{n}^{+})$ and $ \Theta _{d-j}(g^+)=O( \en^{j-1})$ uniformly over  $\mathcal{A}^{c\varepsilon _{n}}$, for $j=2, \dots, d$. 
Since $\varepsilon _{n}^{-1}\mu (A_{n}\setminus L_{\lambda })=\int_{\mathbb{R%
}^{d}}g^+(x)d\mu (x)$ we see that (\ref{SteinerGlob}) implies
\begin{equation}
\label{kappa1}  \sup_{A \in \mathcal{A}^{c\varepsilon _{n}}}\left\vert \frac{1}{\varepsilon _n}\mu(A\setminus L_{\lambda })-M(\tau_{\varepsilon _{n}}(A\setminus L_\lambda))\right\vert =O(\en) .
\end{equation}
Similarly we obtain
\begin{equation}
\label{kappa2}
   \sup_{A \in \mathcal{A}^{c\varepsilon _{n}}}\left\vert \frac{1}{\varepsilon _n}\mu(L_{\lambda }\setminus A)-M(\tau_{\varepsilon _{n}}(\L\setminus A))\right\vert =O(\en).
\end{equation}

For $A\in \mathcal{A}_v^{c\varepsilon _{n}}$, $\mu(A)=v_{\lambda }$ and hence $\mu(L_{\lambda }\setminus A)=\mu(A\setminus L_{\lambda })$. By (\ref{kappa1}) and (\ref{kappa2}), we obtain (\ref{volume})
by definition of $\mathcal{B}^v_{c,n}$.

Define
\begin{equation*} \mathcal{A}_{p}^{c{\en}, +}=\{ A \in \mathcal{A}^{c\varepsilon_n}: | P(A)- p_\lambda|\leq n^{-2/5} \}.\end{equation*} For $A_{n}\in \mathcal{A}_{p}^{c{\en}, +}$ we thus have  $$\left|\int\nolimits_{A_{n}\setminus L_{\lambda}}f(x)d\mu (x)-\int\nolimits_{L_{\lambda }\setminus A_{n}}f(x)d\mu (x)\right|\leq n^{-2/5},$$ and, by (\ref{f+}) and (\ref{f-}), uniformly over $\mathcal{A}_{p}^{c{\en}, +}$,
\begin{eqnarray}
&&\lambda \mu (A_{n}\setminus L_{\lambda })-\int\nolimits_{A_{n}\setminus
L_{\lambda }}s(x)f_{+}^{\prime }(\Pi (x))d\mu (x)  \label{equalprob} \\
&=&\lambda \mu (L_{\lambda }\setminus A_{n})-\int\nolimits_{L_{\lambda}\setminus A_{n}}s(x)f_{-}^{\prime }(\Pi (x))d\mu (x)+o(\varepsilon _{n}^{2})+O(n^{-2/5}).  \notag
\end{eqnarray}
Now consider $\tilde{g}^+(x)=\varepsilon_{n}^{-1}1_{A_{n}\setminus L_{\lambda }}(x)s(x)f'_+(\Pi(x))$. Then by (\ref{SteinerGlob})-(\ref{supfinite}) we find that 
uniformly over $\mathcal{A}_{p}^{c{\en}, +}$, $\varepsilon_{n}^{-1}\int\nolimits_{A_{n}\setminus
L_{\lambda }}s(x)f_{+}^{\prime }(\Pi (x))d\mu (x)=O(\en)$. We can deal similarly with the integral on $\L\setminus A_n$. Using this in  (\ref{equalprob}) in combination with  (\ref{kappa1}) and (\ref{kappa2}) yields (\ref{symmetry}). \end{proof}

Observe that  (\ref{kappa1}) and (\ref{kappa2})  immediately yield
\begin{equation}\label{mismu}
\sup_{A \in \mathcal{A}^{c\varepsilon _{n}}}\left\vert \varepsilon _n^{-1}\mu(A\bigtriangleup L_\lambda)-M((\tau_{\varepsilon _{n}}(A\bigtriangleup L_\lambda)))\right\vert=O(\varepsilon _n).\end{equation}

\begin{lemma} \label{compact}
If $H_1$ and $H_2$ hold, then
\begin{eqnarray}
\label{Bv_compact}
&&\lim_{n\rightarrow \infty}\sup_{B_n\in\mathcal{B}^v_{c,n}}\inf_{B\in\mathcal{B}^*_{c}}d(B_n,B)=0,\\
\label{Bp_compact}
&&\lim_{n\rightarrow \infty}\sup_{B_n\in\mathcal{B}^p_{c,n}}\inf_{B\in\mathcal{B}^*_{c}}d(B_n,B)=0,\\
\label{Bp+_compact}
&&\lim_{n\rightarrow \infty}\sup_{B_n\in\mathcal{B}^{p,+}_{c,n}}\inf_{B\in\mathcal{B}^*_{c}}d(B_n,B)=0.
\end{eqnarray}
\end{lemma}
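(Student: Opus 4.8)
The plan is to reduce all three displays to a single compactness argument and then to obtain (\ref{Bp_compact}) for free. First I would record the elementary inclusions. Since $\mathcal{A}_v\subset\mathcal{A}$ we have $\mathcal{B}^v_{c,n}\subset\mathcal{B}_{c,n}$, and $\mathcal{B}^{p,+}_{c,n}\subset\mathcal{B}_{c,n}$ directly from (\ref{def+}); moreover $\mathcal{B}^p_{c,n}\subset\mathcal{B}^{p,+}_{c,n}$, because $P(A)=p_\lambda$ trivially gives $|P(A)-p_\lambda|\le n^{-2/5}$. Consequently, taking suprema over the smaller index set, (\ref{Bp_compact}) is an immediate consequence of (\ref{Bp+_compact}), and it suffices to prove (\ref{Bv_compact}) and (\ref{Bp+_compact}). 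Note that all quantities here are deterministic, so no probabilistic input is required.

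Before the main step I would isolate the one analytic fact I need beyond $H_1$: the maps $B\mapsto M(B^+)$ and $B\mapsto M(B^-)$ are $d$-continuous on $\mathcal{F}_c$. Writing $H=Nor(L_\lambda)\times\mathbb{R}^+$, one has $B^+\bigtriangleup B'^+=(B\bigtriangleup B')\cap H\subset B\bigtriangleup B'$, hence $|M(B^+)-M(B'^+)|\le M(B^+\bigtriangleup B'^+)\le M(B\bigtriangleup B')=d^2(B,B')$, and symmetrically for $B^-$. All measures are finite because $\mathcal{F}_c\ni B\subset\Sigma_c$ gives $M(B)\le 2s_\lambda c$, which is what makes these differences meaningful.

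The core of the argument I would run by contradiction, say for (\ref{Bv_compact}). If it failed, there would be $\delta>0$ and $B_{n_k}\in\mathcal{B}^v_{c,n_k}$ along a subsequence with $\inf_{B\in\mathcal{B}^*_c}d(B_{n_k},B)\ge\delta$. Since $B_{n_k}\in\mathcal{B}_{c,n_k}$, condition (\ref{unif}) together with the compactness of $(\mathcal{B}_c,d)$ provides $\tilde B_{n_k}\in\mathcal{B}_c$ with $d(B_{n_k},\tilde B_{n_k})\to0$; passing to a further subsequence, compactness gives $\tilde B_{n_k}\to B_\infty\in\mathcal{B}_c$, and hence $d(B_{n_k},B_\infty)\to0$ as well. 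It then remains to verify that $B_\infty\in\mathcal{B}^*_c$, i.e. $M(B_\infty^+)=M(B_\infty^-)$. This is where the approximate-symmetry estimate (\ref{volume}) of Lemma \ref{steiner} enters: $|M(B_{n_k}^+)-M(B_{n_k}^-)|=O(\varepsilon_{n_k})\to0$, and combining this with the $d$-continuity just established and $d(B_{n_k},B_\infty)\to0$ forces $M(B_\infty^+)=M(B_\infty^-)$. Thus $B_\infty\in\mathcal{B}^*\cap\mathcal{B}_c=\mathcal{B}^*_c$, contradicting $\inf_{B\in\mathcal{B}^*_c}d(B_{n_k},B)\ge\delta$. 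This yields (\ref{Bv_compact}); the proof of (\ref{Bp+_compact}) is word-for-word the same, invoking the approximate symmetry (\ref{symmetry}) instead of (\ref{volume}).

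The main conceptual obstacle — and the reason I would route through compactness rather than an explicit construction — is the passage from \emph{approximate} balance $M(B_n^+)\approx M(B_n^-)$ to \emph{exact} balance. Trying to perturb an approximately balanced $\tilde B_n$ into a genuinely balanced member of $\mathcal{B}_c$ is awkward, because $\mathcal{B}_c$ is defined as a class of $d$-limits with no transparent algebraic closure property under such perturbations. Taking a $d$-limit sidesteps this entirely: the vanishing imbalance of Lemma \ref{steiner} collapses, by continuity, into the exact identity defining $\mathcal{B}^*$. The points demanding care are therefore only the continuity of $B\mapsto M(B^{\pm})$, the finiteness of $M$ on $\mathcal{F}_c$ underlying it, and the confirmation that the compactness limit $B_\infty$ indeed lies in $\mathcal{B}_c$, so that imposing symmetry places it in $\mathcal{B}^*_c$.
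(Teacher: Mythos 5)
Your proof is correct and follows essentially the same route as the paper's: a contradiction argument combining (\ref{unif}) with the compactness of $(\mathcal{B}_c,d)$ to extract a $d$-limit $B_\infty\in\mathcal{B}_c$, then Lemma \ref{steiner} to force $M(B_\infty^+)=M(B_\infty^-)$ and hence $B_\infty\in\mathcal{B}^*_c$, with (\ref{Bp_compact}) deduced from (\ref{Bp+_compact}) exactly as in the paper. The only difference is that you make explicit the $d$-continuity of $B\mapsto M(B^{\pm})$, which the paper's proof uses implicitly.
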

\begin{proof}
    If (\ref{Bv_compact}) is false, then for some  $\delta >0$ and some subsequence $n_k$ we can find sets $\tilde{B}_{n_k}\in\mathcal{B}^v_{c,n_k}$ such that $\inf_{\tilde B\in\mathcal{B}^*_{c}}d(\tilde{B}_{n_k},\tilde B)>\delta$. But because of (\ref{unif})
    and the compactness of  $(\mathcal{B}_c,d)$ one can extract a further subsequence $n_{k_j}$ and sets  $B_j$ converging w.r.t. $d$ to some $B\in\mathcal{B}_c$. 
    Lemma \ref{steiner} yields  that $\vert M(B_j^+)-M(B_j^-)\vert \rightarrow 0$ 
    which implies $B\in\mathcal{B}^*_c$ and hence the contradictory fact that $d(B_j,B)\rightarrow 0$. 
    
    The proof of  (\ref{Bp+_compact}) follows similarly. Clearly  (\ref{Bp+_compact}) implies (\ref{Bp_compact}).
\end{proof}

For $c>0$ consider $C\in \mathcal{C}^{c\en}$. Write $C^+=C\setminus \L$ and $C^-=C\cap \L$.
Define $$D_n(\tau_{\en}(C))=n^{2/3}\left(e_{\lambda}(C^-)-e_{\lambda}(C^+)\right)$$ and observe that   $e_{\lambda}(C^-)\geq 0$ and 
$e_{\lambda}(C^+) \leq 0$. 

\begin{lemma}\label{rnSteiner}
If  (\ref{f+}) - (\ref{f-}) hold, then, as $n\to \infty$, 
    $$ 
   \sup_{C\in \mathcal{C}^{c\varepsilon_n} }|D(\tau_{\en}(C))-D_n(\tau_{\en}(C))|\to 0. 
    $$
\end{lemma}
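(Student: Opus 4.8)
The plan is to decompose $D_n(\tau_{\en}(C))$ into an outer and an inner contribution and to treat each by combining the quadratic expansion of $f$ near $S_\lambda$ coming from (\ref{f+})--(\ref{f-}) with the Steiner formula, exactly in the spirit of the proof of Lemma \ref{steiner}. Since $C^-\subset\L$ and $C^+\subset\R^d\setminus\L$, we have $e_{\lambda}(C^-)=\int_{C^-}(f-\lambda)\,d\mu\ge0$ and $-e_{\lambda}(C^+)=\int_{C^+}(\lambda-f)\,d\mu\ge0$, so that, using $n^{2/3}=\en^{-2}$,
$$
D_n(\tau_{\en}(C))=\en^{-2}\Big(\int_{C^+}(\lambda-f)\,d\mu+\int_{C^-}(f-\lambda)\,d\mu\Big).
$$
The two ingredients are then (i) replacing $f-\lambda$ by its first-order expansion and (ii) pushing the resulting Euclidean integrals onto the cylinder via the Steiner formula.

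First I would carry out the expansion. On $C^+\subset S_\lambda^{c\en}\setminus\L$ I approximate $\lambda-f(x)$ by $s(x)f_+'(\Pi(x))$, and on $C^-\subset S_\lambda^{c\en}\cap\L$ I approximate $f(x)-\lambda$ by $-s(x)f_-'(\Pi(x))$ (note $s\le0$ inside $\L$). The total replacement error is bounded by
$$
\int_{S_\lambda^{c\en}\setminus\L}\!\!\big|f-\lambda+s f_+'\big|\,d\mu+\int_{S_\lambda^{c\en}\cap\L}\!\!\big|f-\lambda+s f_-'\big|\,d\mu,
$$
and multiplying by $\en^{-2}=c^2(c\en)^{-2}$ shows, by (\ref{f+})--(\ref{f-}), that $\en^{-2}$ times this error tends to $0$. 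The key point is that this bound is uniform over $C\in\mathcal{C}^{c\en}$, because it only uses the fixed inclusions of $C^{\pm}$ in the full parallel shell $S_\lambda^{c\en}$ and not any set-dependent quantity.

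It then remains to prove that $\en^{-2}\int_{C^+}s f_+'\,d\mu+\en^{-2}\int_{C^-}(-s)f_-'\,d\mu=D(\tau_{\en}(C))+O(\en)$ uniformly. For the outer part I apply (\ref{SteinerGlob})--(\ref{SupportMeasure}) to $g(x)=1_{C^+}(x)\,s(x)f_+'(\Pi(x))$: the substitution $s=\en t$ turns the leading term into $\Theta_{d-1}(g)=\en^2\int_{B^+}s f_+'\,dM$, with $B^+=\tau_{\en}(C^+)$, while for $j\ge2$ the extra factor $s^{j-1}$ gives $\Theta_{d-j}(g)=O(\en^{j+1})$ uniformly, since on $\Sigma_c$ the magnified depth is bounded by $c$ and $\int_{Nor(\L)}f_+'\,d\nu_{d-j}<\infty$ by (\ref{supfinite}); dividing by $\en^2$ leaves $\int_{B^+}s f_+'\,dM+O(\en)$. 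The inner part is identical with $f_-'$; here the Steiner integration starts at $-r(\pi)$, but a point of $C^-$ projecting to $\pi$ necessarily has $s\in(-r(\pi),0]$, so the indicator defining $B^-=\tau_{\en}(C^-)$ already vanishes below $-r(\pi)/\en$ and the change of variables yields $\en^2\int_{B^-}(-s)f_-'\,dM$ with no boundary complication. Adding the two contributions reconstitutes $D(\tau_{\en}(C))=\int_{B^+}s f_+'\,dM+\int_{B^-}(-s)f_-'\,dM$ up to $O(\en)$, and combining with the expansion estimate of the previous paragraph gives the claim. The only genuinely delicate aspect is uniformity over the entire class $\mathcal{C}^{c\en}$ at once: this is secured by dominating each $C^{\pm}$ by the single shell $S_\lambda^{c\en}$ for the expansion error, and by the finiteness of $\int_{Nor(\L)}f_\pm'\,d\nu_{d-j}$ together with the uniform bound $|s|/\en\le c$ on $\Sigma_c$ for the Steiner remainders, so that no set-dependent control is ever required.
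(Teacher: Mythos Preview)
Your proof is correct and follows essentially the same approach as the paper: both combine the first-order expansion of $f-\lambda$ from (\ref{f+})--(\ref{f-}) with the Steiner formula (\ref{SteinerGlob})--(\ref{SupportMeasure}), identify the leading $j=1$ term with $D(\tau_{\en}(C))$ after the change of variables $s=\en t$, and control the $j\ge2$ terms via (\ref{supfinite}). The only difference is organizational---you first isolate the expansion error and then apply Steiner to the linearized integrand, whereas the paper merges the two into a single display---but the ingredients and the logic are identical.
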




\begin{proof}  Write $f^{\prime }(\pi,s )=1_{s>0}f_{+}^{\prime }(\pi )+1_{s\leq
0}f_{-}^{\prime }(\pi )$.  From the Steiner formula (\ref{SteinerGlob})-(\ref{SupportMeasure}) and from 
(\ref{f+}) - (\ref{f-}) we obtain by a straightforward calculation that, uniformly  for $C\in \mathcal{C}^{c\varepsilon_n}$,
\begin{eqnarray*}
&&\!\!\!\!\!\!\!\!D_n(\tau_{\en}(C))\\&&\!\!\!\!\!\!\!\!\!\!\!\!\!\!\!\!=\frac{1}{\en^2}
\int_{Nor(L_{\lambda })}\int_{-(r(\pi )\wedge c\en)}^{c\en
}sf^{\prime }(\pi,s )(1_{C^+}(\pi+su)-1_{C^-}(\pi+su))dsd\nu _{d-1}(\pi ,u)\\&&
\!\!\!\!\!\!\!\!\!\!\!\!\!\!\!\!\!+\sum_{j=2}^d \binom{d-1}{j-1}\frac{1}{\en^2}
\int_{Nor(L_{\lambda })}\int_{-(r(\pi )\wedge c\en)}^{c\en
}s^jf^{\prime }(\pi,s )(1_{C^+}(\pi+su)-1_{C^-}(\pi+su))dsd\nu _{d-j}(\pi ,u)\\
&&\!\!\!\!\!\!\!\!+o(1)\\
&&\!\!\!\!\!\!\!\!=:T_{1,n}(C)+\sum_{j=2}^d \binom{d-1}{j-1}T_{j,n}(C)+o(1).
\end{eqnarray*}
Now by a change of variables it follows that $T_{1,n}(C)= D(\tau_{\en}(C))$. Hence it remains to show that 
$\sup_{C\in \mathcal{C}^{c\varepsilon_n} }\sum_{j=2}^d \binom{d-1}{j-1}|T_{j,n}(C)|\to 0$,
but  this follows  from  $\sup_{C\in \mathcal{C}^{c\varepsilon_n} } |T_{j,n}(C)|=O(\varepsilon_n^{j-1})$, which we obtain from (\ref{supfinite}).
\end{proof}

The following lemma is immediate from basic measure theory, more precisely the fact that an $M$-small set has a small integral.  
\begin{lemma}\label{driftincrement}
Assuming  (\ref{f+})-(\ref{f-}) we have, as $n \to \infty$,
    $$ 
    \sup_{B_n\in \mathcal{B}_{c,n}, B\in \mathcal{B}_{c},\, d(B_n,B)\leq  \gamma_{c,n}}|D(B_n)-D(B)|\to 0.
    $$
\end{lemma}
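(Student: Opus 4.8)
The plan is to reduce the statement to the absolute continuity of the finite measure $D$ restricted to $\Sigma_c$. First I would observe that the density of $D$ is nonnegative, since
$$\frac{dD}{dM}(\pi,u,s)=sf'_+(\pi)1_{s>0}-sf'_-(\pi)1_{s\le 0}=|s|\bigl(f'_+(\pi)1_{s>0}+f'_-(\pi)1_{s\le 0}\bigr)\ge 0,$$
so $D$ is a positive measure. Consequently, for any $B_n,B\in\mathcal{F}$,
$$|D(B_n)-D(B)|\le D(B_n\bigtriangleup B),$$
using $|1_{B_n}-1_B|=1_{B_n\bigtriangleup B}$. This replaces the difference of drifts by the drift mass of the symmetric difference, which is the quantity controlled by $d(B_n,B)$.

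Next I would check that $D$ is finite on $\Sigma_c$. Because $B_n\in\mathcal{B}_{c,n}\subset\mathcal{F}_c$ and $B\in\mathcal{B}_c\subset\mathcal{F}_c$, the symmetric difference satisfies $B_n\bigtriangleup B\subset\Sigma_c=Nor(L_\lambda)\times[-c,c]$, so it suffices to work inside $\Sigma_c$. Integrating the density over the $s$-coordinate over $[-c,c]$ gives
$$D(\Sigma_c)=\frac{c^2}{2}\int_{Nor(L_\lambda)}\bigl(f'_+(\pi)+f'_-(\pi)\bigr)\,d\nu_{d-1}(\pi,u)<\infty$$
by (\ref{supfinite}) with $j=1$. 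Thus the restriction of $D$ to $\Sigma_c$ is a finite measure absolutely continuous with respect to $M$.

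Finally I would invoke the absolute continuity of the integral of the fixed $L^1$ density $1_{\Sigma_c}\,dD/dM$: for every $\eta>0$ there is $\delta>0$ such that $E\subset\Sigma_c$ and $M(E)<\delta$ imply $D(E)<\eta$. Since $d(B_n,B)\le\gamma_{c,n}$ means $M(B_n\bigtriangleup B)\le\gamma_{c,n}^2$ with $\gamma_{c,n}\to 0$, for all $n$ large enough \emph{every} admissible pair satisfies $M(B_n\bigtriangleup B)<\delta$, whence $D(B_n\bigtriangleup B)<\eta$ and therefore the supremum in the lemma is at most $\eta$. Letting $n\to\infty$ and then $\eta\downarrow 0$ finishes the argument. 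The only point requiring care is that the bound produced by absolute continuity depends on the pair $(B_n,B)$ solely through $M(B_n\bigtriangleup B)\le\gamma_{c,n}^2$, which is what upgrades pointwise continuity of $D$ into the uniform statement; as the preamble indicates, there is no genuine obstacle beyond this integrability and uniformity bookkeeping.
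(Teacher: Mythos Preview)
Your proof is correct and is precisely the argument the paper has in mind: the paper's own proof is the single sentence ``immediate from basic measure theory, more precisely the fact that an $M$-small set has a small integral,'' and you have spelled out exactly that absolute-continuity argument, including the verification via (\ref{supfinite}) that $D$ has finite total mass on $\Sigma_c$.
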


\subsection{Concentration lemmas}
\label{concentrations}

\begin{lemma}\label{subs}
Let $\varepsilon>0$ fixed  and  $A\in \mathcal{A}$ with $d_H(A, L_\lambda)\leq  \varepsilon$, then $A\bigtriangleup L_\lambda\subset S_\lambda^\varepsilon$.
\end{lemma}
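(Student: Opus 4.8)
Since the distance from a point to the boundary is $\|x-\Pi(x)\|=\operatorname{dist}(x,S_\lambda)$ irrespective of the (possible) non-uniqueness of $\Pi$ at skeleton points, we have $S_\lambda^\varepsilon=\{x:\operatorname{dist}(x,S_\lambda)\le\varepsilon\}$, so the plan is to show that every $x\in A\bigtriangleup L_\lambda$ satisfies $\operatorname{dist}(x,S_\lambda)\le\varepsilon$. First I would unpack the hypothesis $d_H(A,L_\lambda)\le\varepsilon$ into its two halves, $\sup_{y\in A}\operatorname{dist}(y,L_\lambda)\le\varepsilon$ and $\sup_{y\in L_\lambda}\operatorname{dist}(y,A)\le\varepsilon$, and then treat the outer and inner parts of the symmetric difference separately.

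For the outer part $x\in A\setminus L_\lambda$ the argument is immediate: since $x$ lies outside the convex body $L_\lambda$, its nearest point in $L_\lambda$ lies on $\partial L_\lambda=S_\lambda$, so $\operatorname{dist}(x,S_\lambda)=\operatorname{dist}(x,L_\lambda)$, and because $x\in A$ the first Hausdorff inequality gives $\operatorname{dist}(x,L_\lambda)\le\varepsilon$.

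The inner part $x\in L_\lambda\setminus A$ is where I expect the real work, and I would argue it by contradiction. Suppose $\operatorname{dist}(x,S_\lambda)>\varepsilon$; then the closed ball $\bar B(x,\varepsilon)$ is contained in $L_\lambda$ (a crossing-the-boundary argument along segments from $x$ shows any point within $\varepsilon$ of $x$ stays inside). Since $x\notin A$ and $A$ is closed and convex, let $z$ be the metric projection of $x$ onto $A$ and put $u=(x-z)/\|x-z\|$; the supporting-hyperplane inequality $\langle u,a-z\rangle\le 0$ holds for all $a\in A$. Consider the probe point $w=x+\varepsilon u\in\bar B(x,\varepsilon)\subset L_\lambda$. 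A short computation using $\langle u,x\rangle=\langle u,z\rangle+\|x-z\|$ gives $\langle u,w-a\rangle\ge\|x-z\|+\varepsilon$, and hence $\|w-a\|\ge\langle u,w-a\rangle\ge\|x-z\|+\varepsilon>\varepsilon$ for every $a\in A$, so $\operatorname{dist}(w,A)>\varepsilon$. This contradicts the second Hausdorff inequality applied to $w\in L_\lambda$, forcing $\operatorname{dist}(x,S_\lambda)\le\varepsilon$.

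The only delicate points are the convexity bookkeeping: that an interior point of $L_\lambda$ lying more than $\varepsilon$ from $S_\lambda$ forces an entire $\varepsilon$-ball inside $L_\lambda$, and that displacing $x$ outward along the separating direction $u$ strictly increases the distance to $A$. Everything else is a routine reduction, and combining the two cases yields $A\bigtriangleup L_\lambda\subset S_\lambda^\varepsilon$.
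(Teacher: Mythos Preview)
Your proof is correct and follows essentially the same strategy as the paper's: the outer part is handled identically, and for the inner part both arguments project $x$ onto the convex set $A$, take the outward normal direction $u=(x-\Pi_A(x))/\|x-\Pi_A(x)\|$, and produce a witness point of $L_\lambda$ along that ray whose distance to $A$ exceeds $\varepsilon$. The only difference is the choice of witness: you take $w=x+\varepsilon u$ and verify $w\in L_\lambda$ via the ball inclusion $\bar B(x,\varepsilon)\subset L_\lambda$, whereas the paper pushes further along the same ray until it hits $S_\lambda$ at a point $y$ and uses $\Pi_A(y)=\Pi_A(x)$; your inner-product computation is a bit more self-contained, but the underlying geometry is the same.
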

\begin{proof}
Assume $d_H(A, L_\lambda)\leq  \varepsilon$ and $x\in A\setminus L_\lambda$.
Then $\left\Vert x-\Pi(x)\right\Vert \leq  \varepsilon$. Hence $x\in S_\lambda^\varepsilon$. Now assume $d_H(A, L_\lambda)\leq  \varepsilon$ and $x\in  L_\lambda \setminus A$. Assume $x\notin S_\lambda^\varepsilon$. Let $\Pi_A(x)$ be the orthogonal projection of $x$ on $\partial A$, that is unique since $x \notin A$ and $A$ is convex. There exists an $y\in S_\lambda$ such that $\Pi_A(y)=\Pi_A(x)$. To see this consider the tangent space of $A$ at $\Pi_A(x)$ that is orthogonal to the outer normal of $\partial A$ at $\Pi_A(x)$ driven by $(x-\Pi_A(x))$ and take $y$ as the intersection of that line with $S_\lambda$. Then $\left\Vert y-\Pi_A(x) \right\Vert >\left\Vert y-x \right\Vert\geq  \left\Vert \Pi(x)-x \right\Vert>\varepsilon$. This implies $d_H(\{y\},A)> \varepsilon$ and hence $d_H(A, L_\lambda)> \varepsilon$. Contradiction. Hence we have $x \in S_\lambda^\varepsilon$.
\end{proof}

Consider the following variant of  $L_{3,n}$:
\begin{equation}
    \label{L4}
    L_{4,n}  \in\underset{A\in\mathcal{A}}{\arg\max}\left\{  P_{n}(A): \mu(A)=v_\lambda \right\}.
\end{equation}

\begin{lemma} \label{concentration}
    Under the assumptions of Theorems \ref{thL1} or \ref{thL23}, respectively, for every $\delta>0$, there exists a $c>1$, and an $n_0$, such that, for $j=1$ and for $j=2,4$, and $n\geq  n_0$,
    $$\mathbb{P}(d_H(L_{j,n}, L_\lambda)\geq  c\varepsilon_n)\leq  \delta. $$
\end{lemma}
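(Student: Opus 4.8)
The goal is a uniform concentration statement: with high probability, each estimator $L_{j,n}$ lies within Hausdorff distance $c\varepsilon_n$ of $L_\lambda$. The natural strategy is a peeling (chaining over dyadic shells) argument combined with the excess-risk lower bounds built into $H_2$. The plan is to first reduce all three cases to a statement about the empirical excess mass functional, then to control the empirical fluctuations on shells of increasing Hausdorff distance, and finally to balance these fluctuations against the deterministic drift provided by the margin condition.

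\textbf{Step 1: a common excess-mass reduction.} For each $j$ I would show that the optimality defining $L_{j,n}$ forces $e_\lambda - e_\lambda(L_{j,n})$ to be small, of order $\varepsilon_n^2 = n^{-2/3}$. For $j=1$ this is direct: since $L_\lambda$ is feasible in \eqref{en}, we have $P_n(L_{1,n})-\lambda\mu(L_{1,n}) \ge P_n(L_\lambda)-\lambda\mu(L_\lambda)$, which rearranges into $e_\lambda - e_\lambda(L_{1,n}) \le (P_n-P)(L_{1,n}\bigtriangleup L_\lambda)$ (with the usual sign bookkeeping on $L_{1,n}\setminus L_\lambda$ versus $L_\lambda\setminus L_{1,n}$). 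For $j=4$, the volume constraint $\mu(L_{4,n})=v_\lambda$ together with $P_n(L_{4,n})\ge P_n(L_\lambda)$ gives, after adding and subtracting $\lambda\mu$, the same type of bound $e_\lambda-e_\lambda(L_{4,n})\le 2\sup_{A}|(P_n-P)(A\bigtriangleup L_\lambda)|$ along the relevant shell. The case $j=2$ is handled analogously using $P_n(L_{2,n})\ge p_\lambda$ and the nested-class condition \eqref{nup} to compare volumes. In all cases the deviation $e_\lambda-e_\lambda(L_{j,n})$ is bounded by the empirical process $(P_n-P)$ evaluated on symmetric differences, which is exactly what must be controlled.

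\textbf{Step 2: peeling against the margin.} I would partition the event $\{d_H(L_{j,n},L_\lambda)\ge c\varepsilon_n\}$ into dyadic shells $\mathcal{S}_k=\{A\in\mathcal{A}: 2^k c\varepsilon_n \le d_H(A,L_\lambda) < 2^{k+1}c\varepsilon_n\}$, $k\ge 0$, also splitting off a far regime $d_H\ge\varepsilon_0$ where the global identifiability condition \eqref{mass} gives a uniform positive lower bound on $e_\lambda - e_\lambda(A)$. On each near shell I would use the local margin condition \eqref{margin}, which yields $e_\lambda - e_\lambda(A) \ge \eta_0 d_H^2(A,L_\lambda) \gtrsim \eta_0\, 2^{2k}c^2\varepsilon_n^2$ whenever $d_H\le\varepsilon_0$; here Lemma \ref{subs} guarantees that on each shell $A\bigtriangleup L_\lambda\subset S_\lambda^{2^{k+1}c\varepsilon_n}$, so the relevant sets are genuinely close and the local margin applies. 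Against this quadratic drift I would set the empirical fluctuation $\sup_{A\in\mathcal{S}_k}|(P_n-P)(A\bigtriangleup L_\lambda)|$, whose size is governed by the modulus of continuity of the empirical process. Using the Donsker/VC hypotheses \eqref{arn}–\eqref{VC} and a maximal inequality (e.g.\ Talagrand's inequality or a standard bracketing/uniform-entropy bound), this fluctuation on a shell of $P$-radius $\sim 2^k c\varepsilon_n$ is of order $n^{-1/2}(2^k c\varepsilon_n)^{1/2}$ up to entropy factors. Matching the two, the drift $2^{2k}c^2\varepsilon_n^2$ eventually dominates the fluctuation $n^{-1/2}(2^kc\varepsilon_n)^{1/2}=n^{-2/3}(2^kc)^{1/2}$ precisely because $\varepsilon_n^2=n^{-2/3}$; this is the defining balance of cube-root asymptotics, and it forces the optimizer into the innermost shell. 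Summing the resulting probabilities over $k$ (they decay geometrically once $c$ is taken large) gives the bound $\delta$.

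\textbf{The main obstacle.} The delicate point is the uniform control of the empirical-process modulus on the shells at the cube-root scale, together with ensuring the entropy conditions are strong enough that the chaining integral converges uniformly in $n$ after rescaling by $d_n$. Concretely, the bracketing numbers $[\mathcal{A}]_{c,n}$ enter through \eqref{arn}, and one must verify that the associated entropy integral, evaluated at the shell radii, does not overwhelm the quadratic drift when $c$ is large and $k$ ranges over all dyadic levels. In the VC case this is cleaner, since uniform entropy bounds give the required modulus without the $\limsup_n$ subtlety, but in the bracketing case I expect the bookkeeping relating $d_n$-brackets to $P$-symmetric-difference radii across shells to be the technically demanding part. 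A secondary care point is the far regime: one must confirm via \eqref{mass} and \eqref{gc} that the estimators cannot escape to $d_H\ge\varepsilon_0$, which follows because \eqref{gc} makes the global empirical deviation $O_{\mathbb P}(n^{-1/2})=o(1)$ while \eqref{mass} keeps $e_\lambda-e_\lambda(A)$ bounded away from zero there.
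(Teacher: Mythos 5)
Your treatment of $j=1$ and $j=4$ follows essentially the paper's own route: recenter the objective so that it vanishes at $A=L_\lambda$, rule out the far regime $d_H\geq\delta$ via \eqref{mass} together with a uniform law of large numbers, and then peel geometric shells, playing the quadratic drift from the margin condition \eqref{margin} (with Lemma \ref{subs} to localize each shell inside $S_\lambda^{c'\varepsilon_n}$) against an EK11-type maximal inequality for the empirical process; for $j=4$ the argument transfers verbatim because $L_\lambda$ still lies in the constrained class $\{A\in\mathcal{A}:\mu(A)=v_\lambda\}$, so the recentered supremum is still nonnegative.

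The genuine gap is the case $j=2$. You assert it is ``handled analogously using $P_n(L_{2,n})\geq p_\lambda$ and the nested-class condition \eqref{nup}'', but the analogy breaks at the first step: the recentered objective must now be maximized over the random class $\{A\in\mathcal{A}: nP_n(A)=\lceil np_\lambda\rceil\}$, and $L_\lambda$ is \emph{not} a member of this class --- with probability tending to $1/2$ one has $P_n(L_\lambda)<p_\lambda$. Hence the supremum of the recentered objective is no longer guaranteed to be nonnegative, and to run the peeling you must exhibit a \emph{feasible} comparison set whose objective value is at least $-o(\varepsilon_n^2)=-o(n^{-2/3})$. Condition \eqref{nup} cannot deliver this: it produces a superset $A_r\supset L_\lambda$ with $\mu(A_r)=v_\lambda+r$ but gives no control on \emph{where} the added volume sits, so $A_r$ need not be empirically feasible at all; and even in the most favorable case, where the extra volume lies near $S_\lambda$ so that feasibility forces $r\gtrsim n^{-1/2}$, the only bound on the deficiency obtainable from \eqref{nup} alone is the linear one, $e_\lambda-e_\lambda(A_r)\leq\lambda\mu(A_r\setminus L_\lambda)=\lambda r\sim n^{-1/2}$, which swamps the drift $\tfrac12\eta_0c^2\varepsilon_n^2=\tfrac12\eta_0 c^2 n^{-2/3}$ on every shell and would at best yield the rate $n^{-1/4}$, not $n^{-1/3}$. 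This is exactly why the paper assumes the additional hypotheses \eqref{uni}--\eqref{uni2}, which you never invoke: the nested univariate family $\mathcal{A}_l$, parametrized by probability content with the \emph{quadratic} control $e_\lambda-e_\lambda(A_s)\leq\zeta s^2$, almost surely contains a set $A_{\hat s}$ with $nP_n(A_{\hat s})=\lceil np_\lambda\rceil$; the oscillation modulus of the univariate empirical process gives $\hat s=O_{\mathbb{P}}(n^{-1/2})$, hence a deficiency $O_{\mathbb{P}}(n^{-1})$ and a recentered objective value $\geq -n^{-17/24}=o(\varepsilon_n^2)$ with high probability, after which the $j=1$ peeling applies to the random constrained class. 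Without this ingredient your argument does not establish the $j=2$ part of the lemma.
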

\begin{proof} 
    Consider  
\begin{eqnarray*}&&  L_{1,n}\in \underset{ A\in \mathcal{A}}{\arg\max} \left\{P_n(A)-\lambda\mu(A)\right\}\\
&&=\underset{ A\in \mathcal{A}}{\arg\max} 
\left\{ P_n(A)-P(A) -P_n(L_\lambda)  +P(L_\lambda)+e_\lambda(A)-e_\lambda\right\}.
\end{eqnarray*}
Observe that the expression of which the latter argmax is taken is equal to 0 in case $A=L_\lambda$.

First assume that $\mathcal{A}$ is a VC class.  We begin with showing  that for $n$ large enough 
$$\mathbb{P}(d_H( L_{1,n},\L)\geq\delta)\leq\frac{1}{2}\delta.$$
We obtain from (\ref{mass}) that there  exists an $\eta>0$, such that 
$d_H(\L,A))\geq \delta$ implies $e_\lambda-e_\lambda(A) \geq 2\eta$. 
The Glivenko-Cantelli theorem on $\mathcal{A} $ yields that for  the above $\eta$  for large $n$
$$\mathbb{P}(\sup_{A\in \mathcal{A}} |P_n(A)-P(A) -P_n(L_\lambda)  +P(L_\lambda)|\leq \eta)\geq 1-\frac{1}{2}\delta.$$
Hence $$\mathbb{P}(d_H( L_{1,n},\L)<\delta)\geq 1-\frac{1}{2}\delta.$$

Define, for $c>1$, $$p_c= \mathbb{P}
(c\en \leq 
d_H( L_{1,n},\L)
\leq \min(c^2\en,\delta) ).
$$
and $ \mathcal{A}_{1}=\{A\in \mathcal{A}: c\en \leq 
d_H( \L, A)
\leq \min(c^2\en,\delta)\}$.
From (\ref{margin}) we obtain for large $n$,
$$e_\lambda-e_\lambda(A)>\frac{1}{2}\eta_0 d_H^2(\L,A), \quad\mbox {for small } d_H(\L,A).$$
Hence for large $n$
$$\inf_{A\in \mathcal{A}_{1} }
e_\lambda-e_\lambda(A)\geq \frac{1}{2}\eta_0 \inf_{A\in \mathcal{A}_{1} }d_H^2(\L,A) \geq\frac{1}{2}\eta_0 c^2\en^2.$$
This yields  
\begin{eqnarray*}p_c&\le& \mathbb{P}\left(\sup _{A\in \mathcal{A}_{1} }P_n(A)-P(A) -P_n(L_\lambda)  +P(L_\lambda)\geq \inf_{A\in \mathcal{A}_{1} } e_\lambda-e_\lambda(A)\right) \\
&\le& \mathbb{P}\left(\sup _{A\in \mathcal{A}_{1} }P_n(A)-P(A) -P_n(L_\lambda)  +P(L_\lambda)\geq \frac{1}{2}\eta_0 c^2\en^2\right)
\\&\le& \mathbb{P}\left(2\sup _{D\in \mathcal{D}_{n} }|P_n(D)-P(D)| 
\geq \frac{1}{2}\eta_0 c^2\en^2\right),
\end{eqnarray*}
where $$\mathcal{D}_{n}=\{A\setminus \L:A\in \mathcal{A}_1 \}\cup
\{\L\setminus A:A\in \mathcal{A}_1 \}.$$

Now, very similar as in    the proof of Theorem 2 in \cite{EK11}, we obtain, using Lemma \ref{subs}, that the latter probability is bounded by $$c_1\exp(-c_2\eta_0^2c^2),$$ for some constants $c_1,c_2>0.$

Using this bound on $p_c$ with $c$ replaced by $c^{2^m}$, $m=0,1, 2, \ldots$, we obtain that for large $n$ 
\begin{eqnarray*}&&\mathbb{P}(d_H(L_{1,n}, L_\lambda)\geq  c\varepsilon_n) \\
&&\le\mathbb{P}(d_H(L_{1,n}, L_\lambda)\geq  \delta)
+\sum_{m=0}^\infty \mathbb{P}
(c^{2^m}\en \leq 
d_H( L_{1,n},\L)
\leq \min(c^{2^{m+1}}\en,\delta) )\\
&&\leq \frac{1}{2} \delta +c_1\sum_{m=0}^\infty \exp(-c_2\eta_0^2c^{2^{m+1}})\leq \delta, \end{eqnarray*}
if $c$ is large enough.

In case (\ref{arn}) and (\ref{gc}) hold, the proof for $L_{1,n}$ follows the same lines, but now the arguments in the proof of Theorem 1 in \cite{EK11} should be used, in particular the application of Lemma 19.34 in \cite{vdV98}.

Next we consider $L_{4,n}$. We have 
$$ L_{4,n}\in \underset{A\in\mathcal{A},\mu(A)=v_\lambda}{\arg\max}\left\{  P_n(A)-P(A) -P_n(L_\lambda)  +P(L_\lambda)+e_\lambda(A)-e_\lambda\right\}.$$
This expression is very similar to the one for $L_{1,n}$. The only difference is that    $\mathcal{A}$ there is replaced by 
its  subset $\{A \in \mathcal{A}:  \mu(A)= v_\lambda \}$. Since the arguments above  - dealing with suprema and infima - hold for the entire class $\mathcal{A}$, they remain to hold for this subset.

Finally  consider $L_{2,n}$. We have, almost surely,
\begin{align*}
& L_{2,n}\in \underset{A\in\mathcal{A}, nP_n(A)=\lceil np_\lambda \rceil}{\arg\min}\left\{ \mu(A)\right\}  \\
& =\underset{A\in\mathcal{A}, nP_n(A)=\lceil np_\lambda \rceil}{\arg\max}\left\{  P_{n}(A)-\lambda\mu(A)\right\}  \\
& =\underset{A\in\mathcal{A},nP_n(A)=\lceil np_\lambda \rceil}{\arg\max}\left\{  P_n(A)-P(A) -P_n(L_\lambda)  +P(L_\lambda)+e_\lambda(A)-e_\lambda\right\}.
\end{align*}
This expression looks  similar to the ones for $L_{1,n}$ and $L_{4,n}$, but the difference is that the supremum of the expression of which the latter argmax is taken is not guaranteed to be non-negative since the choice  $A=\L$, as before, is not allowed. However, it follows from (\ref{uni})  that, almost surely, there exists an $A_{\hat s}\in\mathcal{A}_l$ such that $nP_n(A_{\hat s})=\lceil np_\lambda \rceil$. 
Then, using $P(A_{\hat s})=p_\lambda+O_\mathbb{P}(1/\sqrt{n})$, we obtain  
from   (\ref{uni2}) and the behavior of the oscillation modulus of the univariate, uniform empirical process,  
that with arbitrarily high probability for large $n$ that the just mentioned supremum is larger than $-n^{-17/24}$ (instead of being non-negative). Since $n^{-17/24}/\en^2\to 0$ as $n\to \infty$, the proof for $L_{1,n} $ can be easily adapted, replacing $\mathcal{A}$ by its (random) subset 
$\{A \in \mathcal{A}:  nP_n(A)=\lceil np_\lambda \rceil \}$.
 \end{proof}

\subsection{Processes on the cylinder space}
\label{processes}
Here we describe more precisely the local objects, magnified into the cylinder space, namely the empirical process, the drift induced by the local variation of the density, and then the limiting drifted Gaussian process. 

Since for all $c>0$, 
$(\mathcal{B}_{c},d)$ is totally bounded, we have
$$\sup_{B\in\mathcal{B}_{c}}\inf_{B_{n}\in\mathcal{B}_{c,n}%
}d(B_{n}, B)\to 0.
$$
Combining this with
 (\ref{unif}), we have in terms of
Hausdorff distance between classes of sets that for any 
$c>0$, as  $n\rightarrow\infty$,
$$
\gamma_{c,n}:=
\max\left(  \sup_{B_{n}%
\in\mathcal{B}_{c,n}}\inf_{B\in\mathcal{B}_{c}}d(B_{n},B),\sup
_{B\in\mathcal{B}_{c}}\inf_{B_{n}\in\mathcal{B}_{c,n}}d(B_{n},B)\right) \\
\to 0.
$$
Define 
$$\Lambda_n(C)=n^{2/3}(P_n(C)-P(C)),\quad C\in \mathcal{B}(\mathbb{R}^d),$$
and 
$$w_n(B)=\Lambda_n(\tau_{\varepsilon_n}^{-1}(B^+))-\Lambda_n(\tau_{\varepsilon_n}^{-1}(B^-)),\quad B\in \mathcal{F}_c\, .$$

\begin{lemma}\label{embedding}
Assume that $H_{1}$, and $H_{2}$ hold. Let $c>0$. Then on some probability space there exists a triangular array $X_{n,1}, \ldots, X_{n,n}, n\in \mathbb{N},$ of rowwise independent
random vectors with law $P$ on $\mathbb{R}^{d}$ together with a bounded, $d$-continuous version of
$W$ on $\mathcal{B}_c$ such that,  as $n\to\infty$,
\begin{equation} \label{wnproba}
    \sup_{B_n\in \mathcal{B}_{c,n}, B\in \mathcal{B}_{c}, d(B_n,B)\leq  \gamma_{c,n}}|w_n(B_n)-w_n(B)|\stackrel{\mathbb{P}}{\to}0,
\end{equation}
and, with probability 1,
\begin{equation} \label{wnapprox}
\sup_{B\in \mathcal{B}_{c} }|w_n(B)-\sqrt{\lambda}W(B)|\to 0.
\end{equation}
\end{lemma}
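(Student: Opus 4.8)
The plan is to realize $w_n$ as a difference of two localized empirical processes on the outer and inner sides of the shell $S_\lambda^{c\en}$, to identify its limiting covariance as that of $\sqrt{\lambda}W$, to couple it to a single fixed Wiener process by a strong approximation, and finally to control the oscillation in (\ref{wnproba}) by an equicontinuity argument.

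\textbf{Finite-dimensional structure.} First I would analyze the covariance of $w_n$. Since $B^+$ and $B^-$ are supported on the disjoint outer and inner sides of $S_\lambda$, the sets $\tau_{\en}^{-1}(B^+)\subset\mathbb{R}^d\setminus L_\lambda$ and $\tau_{\en}^{-1}(B^-)\subset L_\lambda$ are disjoint and each lies in $S_\lambda^{c\en}$. Writing $\Lambda_n(C)=n^{2/3}(P_n-P)(C)$, a direct computation gives, for $B,B'\in\mathcal{B}_c$,
\[
\Cov(w_n(B),w_n(B'))=n^{1/3}\bigl(P(\tau_{\en}^{-1}(B\cap B'))-R_n\bigr),
\]
where $R_n$ gathers all product-of-probability terms (the $-P(C)P(C')$ contributions, including the outer/inner cross terms that have empty intersection), each of order $n^{1/3}\en^2=n^{-1/3}\to0$ and hence negligible. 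On the shell $f=\lambda+O(s)$ by (\ref{f+})--(\ref{f-}), so $P(\tau_{\en}^{-1}(\cdot))=\lambda\,\mu(\tau_{\en}^{-1}(\cdot))+O(\en^2)$; combined with (\ref{mismu}) and (\ref{kappa1})--(\ref{kappa2}), which give $\en^{-1}\mu(\tau_{\en}^{-1}(\cdot))=M(\cdot)+O(\en)$, this yields $\Cov(w_n(B),w_n(B'))\to\lambda M(B\cap B')=\Cov(\sqrt{\lambda}W(B),\sqrt{\lambda}W(B'))$. Thus the finite-dimensional distributions of $w_n$ converge to those of $\sqrt{\lambda}W$.

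\textbf{Strong approximation and oscillation.} Next I would upgrade this to a uniform coupling. Under either the bracketing condition (\ref{brn}) or the VC condition (\ref{VC}), the localized classes $\{\tau_{\en}^{-1}(B^{\pm}):B\in\mathcal{B}_c\}$ are uniformly Donsker, and the shell carries effective sample size $nP(S_\lambda^{c\en})\asymp n\en=n^{2/3}\to\infty$, placing $\Lambda_n$ in the Gaussian regime. Invoking a Hungarian-type coupling for local empirical processes as developed in \cite{K07}, \cite{KW08}, \cite{EK11}, I would construct on one probability space the triangular array $X_{n,1},\dots,X_{n,n}$ together with a bounded, $d$-continuous version of $W$ on $\mathcal{B}_c$ (whose existence follows from total boundedness of $(\mathcal{B}_c,d)$ and the entropy bound) so that $\sup_{B\in\mathcal{B}_c}|w_n(B)-\sqrt{\lambda}W(B)|\to0$ almost surely, giving (\ref{wnapprox}). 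For (\ref{wnproba}) I would establish asymptotic equicontinuity of $w_n$ in the semi-metric $d$: using (\ref{mismu}) and (\ref{kappa1})--(\ref{kappa2}) one checks that $d(B_n,B)$ small forces the $L_2(P)$-distance between $\tau_{\en}^{-1}(B_n)$ and $\tau_{\en}^{-1}(B)$ to be correspondingly small, and the maximal inequality supplied by (\ref{brn}) (chaining) or (\ref{VC}) (a VC maximal inequality) then bounds the modulus of continuity of the localized process; with $\gamma_{c,n}\to0$ the supremum in (\ref{wnproba}) vanishes in probability.

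\textbf{Main obstacle.} I expect the coupling step to be the crux: both the index class $\mathcal{B}_{c,n}$ and the covariance of $w_n$ depend on $n$ and only converge to $\mathcal{B}_c$ and to $\lambda M$ respectively, yet we must land on a single, fixed Wiener process $W$. Reconciling this requires either running the Hungarian construction directly for the process localized at the shrinking scale $\en$, or combining a strong approximation by $n$-dependent Gaussian approximants with a Gaussian comparison argument on the compact space $(\mathcal{B}_c,d)$ that converts the uniform convergence of covariances into a uniform coupling to $\sqrt{\lambda}W$.
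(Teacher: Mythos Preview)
Your proposal is on the right track and essentially follows the same route as the paper: both rely on the local empirical process machinery of \cite{K07}, \cite{KW08}, and especially \cite{EK11}. The paper's proof is in fact just a direct invocation of Theorems~1 and~2 in \cite{EK11}, which already deliver the weak convergence of the localized process to $\sqrt{\lambda}W$ on $(\mathcal{B}_c,d)$ together with the asymptotic equicontinuity needed for~(\ref{wnproba}); your finite-dimensional and oscillation analyses are essentially a sketch of what those theorems establish.

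The one point to correct is the nature of the coupling. You speak of a ``Hungarian-type'' strong approximation and then worry, in your main obstacle, about reconciling the $n$-dependent index class and covariance with a single fixed $W$. In the paper this is handled much more simply: once weak convergence in $\ell^\infty(\mathcal{B}_c)$ is secured from \cite{EK11}, a Skorohod (almost-sure representation) construction---the one on p.~554 of \cite{EK11}---produces, on a new probability space, the triangular array together with a fixed $d$-continuous version of $W$ for which the uniform convergence in~(\ref{wnapprox}) holds almost surely. No KMT-style rate or $n$-dependent Gaussian comparison is needed; the ``obstacle'' you anticipate dissolves once you replace the Hungarian coupling by Skorohod's theorem. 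The passage from $c=1$ in \cite{EK11} to general $c>0$, and the fact that $w_n$ is a difference $\Lambda_n(\tau_{\en}^{-1}(B^+))-\Lambda_n(\tau_{\en}^{-1}(B^-))$, are routine.
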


\begin{proof}
Note that the assumptions of Theorems 1 and 2  in \cite{EK11} are satisfied. Hence, using these theorems, including a Skorohod construction as on page 554 therein,   yields
 (\ref{wnproba}) and (\ref{wnapprox}). Note that the generalization from $c=1$ therein to arbitrary $c>0$ here, is straightforward. Also the fact that here $w_n$ is a difference of two terms can be easily dealt with.
\end{proof}



For a compact subset $\check{\mathcal{B}}$ of $\mathcal{B}$, define  $$Z(\check{\mathcal{B}})=\underset{B\in\check{\mathcal{B}}}{\arg\max
}\left\{ \sqrt{\lambda} W(B)-D(B)\right\}.$$
Recall that 
$ \sqrt{\lambda} W-D$ is $d$-continuous on $\mathcal{B}_c$, whereas $Var(W(B)-W(B'))=0$ implies $B'=B$ by our equivalence class convention.  Now note that both 
$Z(\mathcal{B}_c)$ and $Z(\mathcal{B}^*_c)$ 
exist and, by Lemma 2.6 in \cite{KP90},  are 
almost surely unique 
on the compact set $\mathcal{B}_c$, respectively $\mathcal{B}^*_c$ . 
Proposition~\ref{Zwelldefined} and a similar statement for $Z(\mathcal{B}^*)$ are  consequences  of (the above and) the following lemma. 



\begin{lemma} \label{Wcfinite}
    Assume that $H_{1}$,  $H_2$, and (\ref{maxW}) hold. For  $\tilde{\mathcal{B}}=\mathcal{B}, \mathcal{B}^*$ we have  
    $$\mathbb{P} \left( \bigcup_{c>0}\ \{Z(\tilde{\mathcal{B}}\cap \mathcal{B}_c)=Z(\tilde{\mathcal{B}}\cap\mathcal{B}_{\tilde c}), \mbox{ for all } \tilde c>c\}\right)=1.
    $$
    Hence $Z(\tilde{\mathcal{B}})$ almost surely exists and is unique; it is  the ``set limit" of\\ $Z(\tilde{\mathcal{B}}\cap \mathcal{B}_m)$:
$$Z(\tilde{\mathcal{B}})=\bigcap_{k= 1}^\infty\bigcup_{m= k}^\infty
Z(\tilde{\mathcal{B}}\cap \mathcal{B}_m).$$
\end{lemma}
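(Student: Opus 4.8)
The plan is to reduce the maximization over the non-compact index class $\tilde{\mathcal{B}}$ to a maximization over a single compact sublevel class $\tilde{\mathcal{B}}\cap\mathcal{B}_{C_0}$, by showing that almost surely the objective $\sqrt{\lambda}W-D$ cannot be maximized far out. The baseline is the empty set: since $L_\lambda\in\mathcal{A}^{c\varepsilon}$ we have $\emptyset\in\tilde{\mathcal{B}}$ for both $\tilde{\mathcal{B}}=\mathcal{B}$ and $\tilde{\mathcal{B}}=\mathcal{B}^*$, with $W(\emptyset)=0$ and $D(\emptyset)=0$, so $\sup_{B\in\tilde{\mathcal{B}}}\{\sqrt{\lambda}W(B)-D(B)\}\geq 0$. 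Hence any maximizer must have nonnegative objective, and it suffices to show that, almost surely, there is a finite (random) $C_0$ with $\sqrt{\lambda}W(B)-D(B)<0$ for every $B$ with $c(B)>C_0$. By (\ref{margin2}) the drift satisfies $D(B)\geq\eta_0 c^2(B)$, so on a far-out set the quadratic drift must be beaten by the Gaussian fluctuation, and the whole argument rests on the fact that the latter grows only sublinearly in $c$.

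First I would decompose $\tilde{\mathcal{B}}$ into the dyadic shells $\mathcal{B}_{2^{k+1}}\setminus\mathcal{B}_{2^{k}}$, $k\geq 0$. On such a shell $c(B)>2^{k}$, so $\sqrt{\lambda}W(B)-D(B)\leq\sqrt{\lambda}\max_{B\in\mathcal{B}_{2^{k+1}}}|W(B)|-\eta_0 2^{2k}$. Choosing the threshold $t_k=\tfrac{1}{2}\eta_0 2^{2k}/\sqrt{\lambda}$ and applying Markov's inequality to $\max_{B\in\mathcal{B}_{2^{k+1}}}W^2(B)$ together with (\ref{maxW}) gives
\begin{equation*}
\mathbb{P}\Bigl(\max_{B\in\mathcal{B}_{2^{k+1}}}|W(B)|>t_k\Bigr)\leq\frac{\eta_1 2^{k+1}}{t_k^{2}}=\frac{8\lambda\eta_1}{\eta_0^{2}}\,2^{-3k},
\end{equation*}
which is summable in $k$. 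By the Borel--Cantelli lemma, almost surely only finitely many shells violate $\max_{B\in\mathcal{B}_{2^{k+1}}}|W(B)|\leq t_k$; on the complementary event the shell objective is at most $\tfrac{1}{2}\eta_0 2^{2k}-\eta_0 2^{2k}<0$. Thus almost surely there is a random $k_0$, and, setting $C_0=2^{k_0}$, the objective is strictly negative on every set with $c(B)>C_0$, so any set attaining a value $\geq 0$, in particular any global maximizer, lies in $\tilde{\mathcal{B}}\cap\mathcal{B}_{C_0}$. Since the same $W^2$-bound controls the subclass $\mathcal{B}^*_c\subset\mathcal{B}_c$ and $\emptyset\in\mathcal{B}^*$, the identical argument covers $\tilde{\mathcal{B}}=\mathcal{B}^*$.

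Granting this, the displayed probability-one statement follows by taking $c=C_0$: for every $\tilde c>C_0$ the extra sets in $\mathcal{B}_{\tilde c}\setminus\mathcal{B}_{C_0}$ all have objective $<0\leq\sup$, so they cannot alter the argmax, giving $Z(\tilde{\mathcal{B}}\cap\mathcal{B}_{C_0})=Z(\tilde{\mathcal{B}}\cap\mathcal{B}_{\tilde c})$. Existence and uniqueness of $Z(\tilde{\mathcal{B}})$ then reduce to the per-level statement recalled just before the lemma, namely that each $Z(\tilde{\mathcal{B}}\cap\mathcal{B}_c)$ exists and is a.s.\ unique on the compact $(\tilde{\mathcal{B}}\cap\mathcal{B}_c,d)$ by Lemma 2.6 in \cite{KP90}, applied at $c=C_0$. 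Finally, since the integer-indexed sequence $Z(\tilde{\mathcal{B}}\cap\mathcal{B}_m)$ is eventually constant equal to $Z(\tilde{\mathcal{B}})$ for $m\geq C_0$, its set-theoretic $\limsup$ coincides with this constant value, yielding $Z(\tilde{\mathcal{B}})=\bigcap_{k\geq1}\bigcup_{m\geq k}Z(\tilde{\mathcal{B}}\cap\mathcal{B}_m)$. The delicate point throughout is the a.s.\ finiteness of $C_0$: it hinges precisely on the sublinear growth of $\mathbb{E}(\max_{B\in\mathcal{B}_c}W^2(B))$ from (\ref{maxW}) beating the quadratic drift of (\ref{margin2}), which is what renders the dyadic probabilities summable.
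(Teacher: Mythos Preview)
Your proof is correct and follows essentially the same approach as the paper: both use $\emptyset$ (from $L_\lambda\in\mathcal{A}$) as the zero baseline, decompose $\{c(B)\geq c\}$ into shells, apply the drift bound (\ref{margin2}) and a Markov-type inequality driven by (\ref{maxW}), and conclude that the argmax stabilizes. The only cosmetic differences are that the paper uses super-exponential shells $c^{2^m}\leq c(B)<c^{2^{m+1}}$ with a first-moment Markov bound and then lets $c\to\infty$ through the increasing events $\Omega_c$, whereas you use dyadic shells with a second-moment bound and invoke Borel--Cantelli directly; both routes yield the same conclusion.
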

\begin{proof}
  We have, using (\ref{margin2}), 
  \begin{eqnarray*}
  && \mathbb{P}\left(\sup_{B\in \tilde{\mathcal{B}}: c(B)\geq c} \sqrt{\lambda} W(B)- D(B)\geq 0\right)\\
  &&\leq \sum_{m=0}^\infty
  \mathbb{P}\left(\sup_{B\in \tilde{\mathcal{B}}: c^{2^m} \leq c(B) < c^{2^{m+1}}} \sqrt{\lambda} W(B)- D(B)\geq 0\right)\\
  &&\leq \sum_{m=0}^\infty
  \mathbb{P}\left(\sup_{B\in \tilde{\mathcal{B}}: c^{2^m} \leq c(B) < c^{2^{m+1}}} \sqrt{\lambda} W(B)- \eta_0c^2(B)\geq 0\right)\\
  &&\leq \sum_{m=0}^\infty
  \mathbb{P}\left(\sup_{B\in \tilde{\mathcal{B}}: c^{2^m} \leq c(B) < c^{2^{m+1}}} \sqrt{\lambda} W(B)\geq \eta_0c^{2^{m+1}}\right),
  \end{eqnarray*}
  which is by (\ref{maxW}), bounded from above by
$$ \frac{\sqrt{\eta_1 \lambda}}{\eta_0}
\sum_{m=0}^\infty c^{-2^m},$$ which is, for arbitrary $\eta>0$, bounded by $\eta$, for $c$ large enough.

Hence, since $L_\lambda\in \mathcal{A}$,  for $c$ large enough,  $$\mathbb{P}(Z(\tilde{\mathcal{B}}\cap \mathcal{B}_c)=Z(\tilde{\mathcal{B}}\cap\mathcal{B}_{\tilde c}), \mbox{ for all } \tilde c>c)\geq 1-\eta.$$ If this event is denoted by $\Omega_c$, then $\mathbb{P}(\cup_{c>0} \Omega_c)=1$.
\end{proof}

\color{black}


\subsection{Proof of Theorem \ref{thL1}}
\label{proofth1} We work in the setting of Lemma \ref{embedding}. For $c>0$ we have 
\begin{align*}
& \underset{A\in\mathcal{A}^{c\varepsilon
_{n}}}{\arg\max}\left\{  P_{n}(A)-\lambda\mu(A)\right\}  \\
& =\underset{A\in\mathcal{A} ^{c\varepsilon
_{n}}}{\arg\max}\left\{  P(A)-\lambda\mu(A)-P(L_{\lambda})+\lambda
\mu(L_{\lambda})+n^{-2/3}(\Lambda_{n}(A)-\Lambda_{n}(L_{\lambda}))\right\}
\\
& =\underset{A\in\mathcal{A}^{c\varepsilon
_{n}}}{\arg\max}\left\{  n^{2/3}(e_{\lambda}(A)-e_{\lambda})+\Lambda
_{n}(A)-\Lambda_{n}(L_{\lambda})\right\}  \\
& =\varphi_{\varepsilon_{n}}\left\{  \underset{B\in\mathcal{B}_{c,n}}%
{\arg\max}\left\{  w_{n}(B)-D_n(B)\right\}  \right\} 
.
\end{align*}

Consider the events
\[
\Xi_{c,n}^{\Lambda}=\left\{ L_{1,n}\bigtriangleup L_{\lambda}\subset S_\lambda^{c\varepsilon_n} 
\right\}  ,\quad\Xi_{c}^{W}=\left\{  Z(\mathcal{B}_{c})=Z(\mathcal{B}%
)\right\}  ,\]
where  $Z(\mathcal{B}_{c})$ and $Z(\mathcal{B})$ are defined in terms of a Wiener process $W$ satisfying (\ref{wnapprox}) in  Lemma~\ref{embedding}.
Clearly, Lemmas \ref{subs}, \ref{concentration} and \ref{Wcfinite} imply that for any
$\delta>0$ there exists a $c=c(\delta)>0$ such that we have $\mathbb{P}\left(
\Xi_{c,n}^{\Lambda}\cap\Xi_{c}^{W}\right)  >1-\delta$ for all $n$ large
enough. Now define%
\[
m_{c,n}=\sup_{B\in\mathcal{B}_{c,n}}\left\{  w_{n}(B)-D_n(B)\right\}  ,\quad
m_{c}=\max_{B\in\mathcal{B}_{c}}\left\{  \sqrt{\lambda}W(B)-D(B)\right\}
\]
and observe that Lemmas~\ref{embedding},  \ref{rnSteiner} and  \ref{driftincrement} imply
\begin{equation}\label{mm} m_{c,n}\stackrel{\mathbb{P}}{\to}  m_{c}, \mbox{ as } n\rightarrow\infty.
\end{equation} 

We have for any $\varepsilon>0$ fixed, every argmax $L_{1,n}$, and
all large enough $n$ 
\begin{align*}
& \mathbb{P}\left(  M(\tau_{\varepsilon_{n}}(L_{1,n}\bigtriangleup L_{\lambda})\bigtriangleup
Z(\mathcal{B}))>\delta^{2}\right)  \\
& \leq \mathbb{P}\left(  \left\{  M(\tau_{\varepsilon_{n}}(L_{1,n}\bigtriangleup
L_{\lambda})\bigtriangleup Z(\mathcal{B}))>\delta^{2}\right\}  \cap\Xi_{c,n}^{\Lambda
}\cap\Xi_{c}^{W}\right)  +\delta\\
& \leq \mathbb{P}\left(  d\left(  \underset{B\in\mathcal{B}_{c,n}}%
{\arg\max}\left\{  w_{n}(B)-D_n(B)\right\}  ,Z(\mathcal{B}_{c})\right)
>\delta\right)  +\delta\\
& \leq  \mathbb{P}\left(  \sup_{B\in\mathcal{B}_{c,n}:d(B,Z(\mathcal{B}%
_{c}))>\delta}\left\{  w_{n}(B)-D_n(B)\right\}  \geq 
m_{c,n}\right)  +\delta\\
& \leq \mathbb{P}\left(  \sup_{B\in\mathcal{B}_{c,n}:d(B,Z(\mathcal{B}%
_{c}))>\delta}\left\{  w_{n}(B)-D(B)\right\}  \geq 
m_{c,n}-\varepsilon\right)  +\delta
\end{align*}
which is by (\ref{mm})
\[
\leq \mathbb{P}\left(  \sup_{B\in\mathcal{B}_{c,n}:d(B,Z(\mathcal{B}%
_{c}))>\delta}\left\{  w_{n}(B)-D(B)\right\}  \geq  m_{c}-2\varepsilon
\right)  +2\delta%
\]
which by (\ref{wnproba}) and Lemma \ref{driftincrement} is in turn
\[
\leq \mathbb{P}\left(  \sup_{B\in\mathcal{B}_{c}:d(B,Z(\mathcal{B}%
_{c}))\geq  \delta/2}\left\{  w_{n}(B)-D(B)\right\}  \geq  m_{c}-3\varepsilon
\right)  +3\delta%
\]
and this is by (\ref{wnapprox}) and then by Lemma 2.6 in \cite{KP90}
\[
\leq \mathbb{P}\left(  \max_{B\in\mathcal{B}_{c}:d(B,Z(\mathcal{B}%
_{c}))\geq  \delta/2}\left\{ \sqrt{\lambda} W(B)-D(B)\right\}  \geq  m_{c}-4\varepsilon
\right)  +4\delta\leq 5\delta,
\]
 provided that we  choose a small enough $\varepsilon$ with respect to $\delta$.

Note that $Z(\mathcal{B})$ depends on $\delta$ through $c=c(\delta)$. We can avoid this, but make it instead depend on $n$ as in the statement of the theorem, by a  diagonal selection argument.
 
 The second and third statement in Theorem \ref{thL1} follow directly from the just established first one and 
  the  Steiner formula (\ref{SteinerGlob})-(\ref{SupportMeasure}), since $M$ can be approximated by $\varepsilon_n^{-1}\mu$ after transforming back by $\tau^{-1}_{\varepsilon_n}$ (see (\ref{mismu})), and then  $\lambda\mu$ can be approximated  by $P$ near $S_\lambda$.  $\hfill\Box$%

\subsection{Proof of Theorem~\ref{thL23}}
\label{proofth2}
 The proof of Theorem \ref{thL23} with $L_{3,n}$ replaced by $L_{4,n}$ from (\ref{L4}) is similar to that  of Theorem \ref{thL1}, only $\mathcal{A}$ has to be replaced by $\mathcal{A}_{v}$ and $\mathcal{B}$ by $\mathcal{B}^*$. 

Now take an argmax $L_{3,n}$ with $\mu(L_{3,n})<v_\lambda$. Then, using (\ref{nup}),  for some $L_{4,n}$ we have $L_{3,n}\subset L_{4,n}$. Now, since   $n^{1/2}(P_n-P)=O_\mathbb{P}(1)$ uniformly on $\mathcal{A}$, we have with probability tending to 1,
$$P(L_{3,n})\geq P_n(L_{3,n})-\frac{1}{2} n^{-2/5}\geq P_n(L_\lambda)-\frac{1}{2} n^{-2/5} \geq p_\lambda-n^{-2/5} .$$
Since $ P(L_{3,n})-\lambda \mu(L_{3,n})\leq p_\lambda-\lambda v_\lambda$
we get
$\mu(L_{3,n})\geq  v_\lambda-\frac{1}{\lambda}n^{-2/5}$
thus
$$\mu(L_{3,n}\triangle L_{4,n})= \mu( L_{4,n})-\mu(L_{3,n})\leq \frac{1}{\lambda}n^{-2/5}=o(\varepsilon_n).$$
This also implies that $M(\tau_{\varepsilon_n}(L_{3,n}\triangle L_{4,n}))\leq \en (\mu( L_{4,n})-\mu(L_{3,n}))\stackrel{\mathbb{P}}{\to} 0$ and the statements of Theorem \ref{thL23} for $j=3$ follow from those for $j=4$.

Finally we  consider $L_{2,n}$. We follow again the line of reasoning  and the notation in the proof of Theorem~\ref{thL1}. Define $$\hat{\mathcal{B}}_{c,n}=\{\tau_{\varepsilon_n}(A \bigtriangleup L_\lambda): A \in \mathcal{A}^{c\varepsilon_n}, P_n(A)=\lceil np_\lambda \rceil/n\}.$$

We have for $c>0$ 
\begin{align*}
& \underset{A\in\mathcal{A}_{c\varepsilon
_{n}}, nP_n(A)=\lceil np_\lambda \rceil}{\arg\min}\left\{ \mu(A)\right\}  \\
& =\underset{A\in\mathcal{A}_{c\varepsilon
_{n}}, nP_n(A)=\lceil np_\lambda \rceil}{\arg\max}\left\{  P_{n}(A)-\lambda\mu(A)\right\}  \\
& =\underset{A\in\mathcal{A}_{c\varepsilon
_{n}},nP_n(A)=\lceil np_\lambda \rceil}{\arg\max}\left\{  P(A)-\lambda\mu(A)-P(L_{\lambda})+\lambda
\mu(L_{\lambda})+n^{-2/3}(\Lambda_{n}(A)-\Lambda_{n}(L_{\lambda}))\right\}\\
& =\underset{A\in\mathcal{A}_{c\varepsilon
_{n}}, nP_n(A)=\lceil np_\lambda \rceil}{\arg\max}\left\{  n^{2/3}(e_{\lambda}(A)-e_{\lambda})+\Lambda
_{n}(A)-\Lambda_{n}(L_{\lambda})\right\}  \\
& =\varphi_{\varepsilon_{n}}\left(  \underset{B\in\hat{\mathcal{B}}_{c,n}}%
{\arg\max}\left\{  w_{n}(B)-D_n(B)\right\}  \right)  
.
\end{align*}

Consider the events
\[
\Xi_{c,n}^{\Lambda,*}=\left\{  L_{2,n}\triangle L_{\lambda}\subset S_\lambda^{c\varepsilon_n}\right\}  ,\quad\Xi_{c}^{W,*}=\left\{  Z(\mathcal{B}^*_c)=Z(\mathcal{B}^*%
)\right\}  .
\]
Again,  Lemmas \ref{concentration} and \ref{Wcfinite} imply that for any
$\delta>0$ there exists a $c=c(\delta)>0$ such that we have $\mathbb{P}\left(
\Xi_{c,n}^{\Lambda,*}\cap\Xi_{c}^{W,*}\right)  >1-\delta$ for all $n$ large
enough. Now define
\begin{eqnarray*}
&&\hat m_{c,n}=\sup_{B\in \hat{\mathcal{B}}_{c,n}}\left\{  w_{n}(B)-D_n(B)\right\} , \quad
m_{c,n}^p=\sup_{B\in{\mathcal{B}}_{c,n}^p}\left\{  w_{n}(B)-D(B)\right\}  ,\\
&&m_{c}^*=\max_{B\in\mathcal{B}^*_{c}}\left\{  \sqrt{\lambda}W(B)-D(B)\right\}
\end{eqnarray*}
and note that by  (\ref{clo}), the asymptotic equicontinuity of $w_n$   (as in the proof of Lemma~\ref{embedding} given in \cite{EK11}), and Lemma \ref{rnSteiner}, for $\varepsilon>0$, 
\begin{equation}\label{mcmh}\mathbb{P}( m_{c,n}^p\leq \hat m_{c,n}+\varepsilon)\to 0, \quad \mbox {as } n\to\infty,\end{equation} 
and that  by  (\ref{Bp_dense}), (\ref{Bp_compact}) and Lemma~\ref{embedding} (possibly with a larger $\gamma_{c,n}\to 0$),  
\begin{equation}\label{mmm} 
  m_{c,n}^p\stackrel{\mathbb{P}}{\to}  m_{c}^*, \mbox{ as } n\rightarrow\infty.
\end{equation} 

Recall the definition of $\mathcal{B}_{c,n}^{p,+}$ in  (\ref{def+}). We have for  $\varepsilon>0$, every argmin $L_{2,n}$, and
all large enough $n$ 
\begin{align*}
& \mathbb{P}\left(  M(\tau_{\varepsilon_{n}}(L_{2,n}\bigtriangleup L_{\lambda})\bigtriangleup
Z(\mathcal{B}^*))>\delta^{2}\right)  \\
& \leq \mathbb{P}\left(  \left\{  M(\tau_{\varepsilon_{n}}(L_{2,n}\bigtriangleup
L_{\lambda})\bigtriangleup Z(\mathcal{B}^*))>\delta^{2}\right\}  \cap\Xi_{c,n}^{\Lambda,*
}\cap\Xi_{c}^{W,*}\right)  +\delta\\
& \leq \mathbb{P}\left(  d\left(  \underset{B\in \hat{\mathcal{B}}_{c,n}}%
{\arg\max}\left\{  w_{n}(B)-D_n(B)\right\}  ,Z(\mathcal{B}^*_c)\right)
>\delta\right)  +\delta
\end{align*}
\begin{align*}
& \leq\mathbb{P}\left(  \sup_{B\in\hat{\mathcal{B}}_{c,n}:d(B,Z(\mathcal{B}^*_c)>\delta}\left\{  w_{n}(B)-D_n(B)\right\}  \geq 
\hat m_{c,n}\right)  +\delta\\
& \leq \mathbb{P}\left(  \sup_{B\in\hat{\mathcal{B}}_{c,n}:d(B,Z(\mathcal{B}^*_c))>\delta}\left\{  w_{n}(B)-D(B)\right\}  \geq 
\hat m_{c,n}-\varepsilon\right)  +\delta\\
& \leq \mathbb{P}\left(  \sup_{B\in\mathcal{B}_{c,n}^{p,+}:d(B,Z(\mathcal{B}^*_c))>\delta}\left\{  w_{n}(B)-D(B)\right\}  \geq 
\hat m_{c,n}-\varepsilon\right)  +2\delta
\end{align*}
which is by (\ref{mcmh})
$$ \leq \mathbb{P}\left(  \sup_{B\in\mathcal{B}_{c,n}^{p,+}:d(B,Z(\mathcal{B}^*_c))>\delta}\left\{  w_{n}(B)-D(B)\right\}  \geq 
 m_{c,n}^p-2\varepsilon\right)  +3\delta
$$
which is by (\ref{mmm})
\[
\leq \mathbb{P}\left(  \sup_{B\in\mathcal{B}_{c,n}^{p,+}:d(B,Z(\mathcal{B}^*_c))>\delta}\left\{  w_{n}(B)-D(B)\right\}  \geq  m_c^*-3\varepsilon
\right)  +4\delta%
\]
which  by (\ref{wnproba}), Lemma \ref{driftincrement}, and 
(\ref{Bp+_compact}), is in turn
\[
\leq \mathbb{P}\left(  \sup_{B\in\mathcal{B}^*_{c}:d(B,Z(\mathcal{B}^*_c))>\delta/2}\left\{  w_{n}(B)-D(B)\right\}  \geq  m_c^*-4\varepsilon
\right)  +5\delta%
\]
and this is by (\ref{wnapprox}) and  then by again Lemma 2.6 in \cite{KP90}
\[
\leq \mathbb{P}\left(  \max_{B\in\mathcal{B}^*_{c}:d(B,Z(\mathcal{B}^*_c))>\delta/2}\left\{ \sqrt{\lambda} W(B)-D(B)\right\}  \geq  m_c^*-5\varepsilon
\right)  +6\delta\leq 7\delta,
\]
provided $\varepsilon$ is chosen small enough.
The last two paragraphs of the proof of Theorem \ref{thL1} now yield the stated results. 
$\hfill\Box$

\color{black}
\bibliographystyle{imsart-nameyear}
\bibliography{Level_Set}
\end{document}